\documentclass{article}

\usepackage{graphicx}
\usepackage{cite}
\usepackage{authblk}
\usepackage{amsthm}
\usepackage{amsmath}
\usepackage{amssymb}
\usepackage[left=3cm, right=3cm, lines=45, top=1.0in, bottom=1.0in]{geometry}
\usepackage[colorlinks=true]{hyperref}

\date{}
\title{A Brunn--Minkowski inequality for the Hessian eigenvalue in convex domain}
\author{ Jiahuan Li, Xi-Nan Ma, Paolo Salani}

\newcommand{\keywords}[1]{\par\quad\textbf{Keywords:} #1}

\begin{document}

\maketitle
\newtheorem{theorem}{Theorem}[section]
\newtheorem{definition}[theorem]{Definition}
\newtheorem{lemma}[theorem]{Lemma}
\newtheorem{corollary}[theorem]{Corollary}
\newtheorem{example}[theorem]{Example}
\newtheorem{proposition}[theorem]{Proposition}
\newtheorem{conjecture}[theorem]{Conjecture}
\newtheorem{remark}[theorem]{Remark}

\begin{abstract}
We use the deformation methods to obtain the strictly log concavity of solution of a class Hessian equation in bounded convex domain in $\mathbb{R}^{n}$, as an application we get the Brunn--Minkowski inequality for the Hessian eigenvalue and characterize the equality case in bounded strictly convex domain in $\mathbb{R}^{n}$.
\end{abstract}

\keywords{Hessian equations; convexity; eigenvalue; Brunn--Minkowski inequality; convex envelope; constant rank theorem; }

\numberwithin{equation}{section}

\section{Introduction}

In 1976, in a legendary paper \cite{brascamp1976extensions}, Brascamp and Lieb established the log-concavity of the fundamental solution of diffusion equation with convex potential in bounded convex domain in $\mathbb{R}^{n}$. This in particular implies the log-concavity of the first Dirichlet eigenfunction of Laplace equation in convex domains, and also the Brunn--Minkowski inequality for the first eigenvalue, that is, 
\begin{equation}\label{eq:bm-laplace}
\lambda\bigl((1-t)K_{0}+tK_{1}\bigr)^{-1/2}\geq (1-t)\lambda(K_{0})^{-1/2}+t\lambda(K_{1})^{-1/2},
\end{equation}
where $t\in[0,1]$ and $K_{0}$, $K_{1}$ are nonempty convex bodies in $\mathbb{R}^{n}$. In fact, this inequality holds for all compact connected domain having sufficiently regular boundary. As for the classical Brunn-Minkowski inequality, the equality case of \eqref{eq:bm-laplace} has its own interest, and Jerison \cite{jerison1996direct} pointed out that it is related to uniqueness of the solution for the Minkowski problem about $\lambda$. In \cite{colesanti2005brunn}, Colesanti provides a new proof of \eqref{eq:bm-laplace} for convex bodies, and proves that equality holds if and only if $K_{0}$ is homothetic to $K_{1}$. Simultaneously, Salani \cite{salani2005brunn} proved the Brunn-Minkowski inequality for the Dirichlet eigenvalue of the Monge-Amp\`ere equation and showed that equality again holds if and only if the involved convex sets are homothetic.

Laplacian and Monge-Amp\`ere operators are the extremal cases (corresponding to $k=1$ and $k=n$) of a class of operators known as {\em Hessian operators}: for $k=1,\ldots,n$, and a $C^{2}$ function $u$, the $k$-th Hessian operator $\sigma_{k}(D^{2}u)$ is the $k$-th elementary symmetric function of the eigenvalues of the Hessian matrix $D^2u$ of $u$. More explicitly:
$$
\sigma_k(D^2u)=\sum_{1\leq i_1<\dots<i_k\leq n}\lambda_{i_1}\cdots\lambda_{i_k}\,,
$$
where $\lambda_1,\dots,\lambda_n$ are the eigenvalues of $D^2u$. Notice that $\sigma_k(D^2u)$ can be also defined as the sum of all principal $k\times k$ minors of $D^2u$.
The operator $\sigma_k$, for $k>1$, is fully nonlinear and it is elliptic only when restricted to a suitable class of admissible functions, called {\em $k$-convex functions}:
a function $u\in C^2(\Omega)$ is said {\em $k$-convex} in an open set $\Omega\subset\mathbb{R}^n$ if $\sigma_j(D^2u)\geq 0$ in $\Omega$ for $j=1,\dots,k$. 
Notice that $n$-convexity is equivalent to usual convexity in the class of $C^2$ functions, and it implies $k$-convexity for any $k=1,\dots,n$. Similarly, $k$-convexity, for $k=1,\dots, n-1$, can be defined also for $C^2$ sets in $\mathbb{R}^n$, using the symmetric functions of the principal curvatures. For sets, $(n-1)$-convexity coincides with usual convexity. Noticeably, the level sets of a $k$-convex functions are $(k-1)$-convex. 

Hessian operators give rise to Hessian equations. Dirichlet problems for Hessian equations have attracted great interest and have been deeply studied by many authors, following the seminal paper \cite{caffarelli1985dirichlet}.
In particular, given $k\in\{2,\dots,n-1\}$ and a smooth uniformly $(k-1)$-convex domain $\Omega\subset\mathbb{R}^n$, the eigenvalue problems for the Laplacian and Monge-Amp\'ere operators are generalized as follows:
\begin{equation}\label{eq:eigenvalue-problem}
\begin{cases}
\sigma_{k}(D^{2}u)=\lambda_k(\Omega)(-u)^{k},\quad u<0 & \text{in }\Omega,\\
u=0 & \text{on }\partial \Omega\,.
\end{cases}
\end{equation}
Wang \cite{wang1994class} proved that, for $1<k<n$, there exists a unique positive eigenvalue $\lambda(\Omega)$ such that this problems can be solved, and it has a unique, up to multiplication by a positive factor, negative $k$-convex solution $u\in C^{\infty}(\Omega)\cap C^{1,1}(\overline{\Omega})$. 
Equivalently we can define
\begin{equation}\label{eq:lambda-def}
\lambda_k(\Omega)=\inf\left\{
\frac{-\int_{\Omega}u\,\sigma_{k}(D^{2}u)\,dx}{\int_{\Omega}|u|^{k+1}\,dx\,:\, u\in\Phi_0^k(\Omega)}
\right\},
\end{equation}
where 
$$
\Phi_0^k=\left\{u\in C^{2}(\Omega)\cap C(\overline{\Omega})\,\text{ and }\,u\,\text{is $k$-convex in }\Omega\right\}
$$
Clearly this functional $\lambda(\Omega)$ is homogeneous of order $-2k$.

As we said, Laplace equation and Monge-Amp\'ere equation are just particular cases of $k$-Hessian equations, so it is natural to ask if results similar to the ones described at the beginning hold for all the $k$-Hessian equations. In particular, the questions are whether for $n\geq 3$ and $k=2,\dots,(n-1)$ the eigenvalue $\lambda_k$ satisfies a Brunn-Minkowski inequality, and whether the eigenfunction $u$ is log-convex (in the sense that $-\log(-u)$ is convex) or shares any other convexity property when $\Omega$ is convex.

A positive answer to these two questions has been given in \cite{ma2008convexity, liumaxu2010cc} and later in \cite{salani2012cc}, but only for $k=2$ and $n=3$. Here we treat the case $k=2$ for $n>3$, proving the following two theorems.

\begin{theorem}\label{thm:strict-convex}
Let $n\geq4$. Assume $K$ is a bounded uniformly convex smooth domain in $\mathbb{R}^{n}$, with everywhere strictly positive mean curvature of the boundary,
and let $u\in C^{\infty}(K)\cap C^{1,1}(\overline{K})$ is the unique (up to multiplication by a positive factor) admissible solution of
\begin{equation}\label{eq:s2-eigen}
\begin{cases}
\sigma_{2}(D^{2}u)=\lambda_2(K)(-u)^{2},\quad u<0 & \text{in }K,\\
u=0 & \text{on }\partial K\,.
\end{cases}
\end{equation}
Then $v=-\log(-u)$ is a convex function, with $D^2v>0$, in $K$.
\end{theorem}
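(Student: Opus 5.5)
We argue by deformation combined with a constant rank theorem. Set $v=-\log(-u)$; then $u=-e^{-v}$, $v\to+\infty$ at $\partial K$, and \eqref{eq:s2-eigen} becomes an equation for $v$. Since $D^2u=e^{-v}(D^2v-Dv\otimes Dv)$, we have
\begin{equation*}
\sigma_2\bigl(D^2v-Dv\otimes Dv\bigr)=\lambda_2(K)\,e^{0}=\lambda_2(K),
\end{equation*}
a fully nonlinear elliptic equation in $v$ (elliptic on the admissible cone, where $D^2v-Dv\otimes Dv\in\Gamma_2$). The plan has three steps. First, we show $D^2v>0$ near $\partial K$. Second, we prove a constant rank theorem: if $D^2v\ge0$ in $K$, then $D^2v$ has constant rank, hence full rank by the first step. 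Third, we run a continuity argument connecting $K$ to a ball, where $u$ is radial and we can check $v$ is strictly convex directly.

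\textbf{Boundary behaviour.} Near $\partial K$ we use $u\in C^{1,1}(\overline K)$ together with the Hopf lemma, which gives $|Du|>0$ on $\partial K$. Writing $D^2v=\frac{D^2u}{-u}+\frac{Du\otimes Du}{u^2}$, the second term dominates in the normal direction like $d^{-2}$, where $d$ is the distance to the boundary. In tangential directions the leading term is $\frac{u_\nu\,\kappa_{ij}}{-u}$ up to sign, with $\kappa$ the second fundamental form of $\partial K$. Uniform convexity of $K$ therefore makes it positive of order $d^{-1}$, and the mixed terms are controlled by Cauchy--Schwarz. This gives $D^2v>0$ in a neighbourhood of $\partial K$ depending only on $\kappa_{\min}$, $\min|Du|$ and $\|u\|_{C^{1,1}}$. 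Smoothness up to the boundary would help here; otherwise we work with a barrier/expansion. The positivity of mean curvature guarantees $K$ is uniformly $1$-convex, so Wang's theory \cite{wang1994class} applies.

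\textbf{Deformation.} Let $K_t=(1-t)B+tK$ for a ball $B$, $t\in[0,1]$. Each $K_t$ is smooth and uniformly convex, with positive mean curvature. By \cite{wang1994class}, $\lambda_2(K_t)$ and the normalized eigenfunctions $u_t$ (say $\min u_t=-1$) depend continuously on $t$ in $C^2_{loc}$, using uniqueness. At $t=0$, $u$ is radial, and a direct ODE analysis shows $v$ is strictly convex. Let $T$ be the set of $t$ with $D^2v_t>0$ in $K_t$.
\begin{itemize}
\item $T$ is open, by the uniform boundary estimate and interior continuity.
\item If $t_0$ is a limit point of $T$, then $D^2v_{t_0}\ge0$.
\item The constant rank theorem then forces $D^2v_{t_0}>0$, since the rank is $n$ near the boundary. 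Hence $T$ is closed, and so $T=[0,1]$.
\end{itemize}

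\textbf{Constant rank theorem (main obstacle).} Suppose the minimal rank $l<n$ of $D^2v$ is attained at an interior point. Following the Caffarelli--Friedman / Korevaar--Lewis and Bian--Guan approach, we set
\begin{equation*}
\phi=\sigma_{l+1}(D^2v)+\frac{\sigma_{l+2}(D^2v)}{\sigma_{l+1}(D^2v)}
\end{equation*}
and aim to show a differential inequality
\begin{equation*}
F^{ij}\phi_{ij}\le C\bigl(\phi+|D\phi|\bigr),
\end{equation*}
where $F^{ij}$ is the linearized operator of $\sigma_2(D^2v-Dv\otimes Dv)$. The strong maximum principle then gives $\phi\equiv0$. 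The computation splits indices into the "good" ones $G$ (large eigenvalues) and the "bad" ones $B$ (small eigenvalues). It uses the equation differentiated twice, and the difficulty is controlling third-derivative terms $v_{\alpha\beta i}$, $\alpha,\beta\in B$, together with the gradient terms from $Dv\otimes Dv$. For $\sigma_2$ the structure of $F^{ij}=\sigma_1(A)\delta_{ij}-A_{ij}$, with $A=D^2v-Dv\otimes Dv$, is explicit. The crucial sign should come from the right-hand side, which is a constant and corresponds to $\lambda(-u)^2$, log-concave in the appropriate sense. It should also come from the convexity-type condition on $(\sigma_2)^{1/2}$ in the matrix argument, as in the $n=3$ case \cite{ma2008convexity,liumaxu2010cc}.

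I expect the genuinely new difficulty for $n\ge4$ to be showing that the quadratic form in the third derivatives is nonpositive modulo $O(\phi+|D\phi|)$. This can fail in general, because $A$ need not be positive, only $2$-convex. I would first try to exploit that, at the minimum-rank point, $D^2v$ restricted to $B$ vanishes while $A$ restricted to $B$ equals $-(Dv_B\otimes Dv_B)$. One can rotate so that $Dv$ has at most one component in $B$, which should reduce the bad-index contribution to an explicitly signed expression.
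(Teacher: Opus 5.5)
Your overall architecture matches the paper's: the radial case on a ball, convexity in a boundary strip, the deformation $K_t=(1-t)B+tK$, and closedness of the good set via a constant rank theorem. But the constant rank theorem, which is the only genuinely new ingredient for $n\ge4$, is not proved. You set up the test function $\phi=\sigma_{l+1}+\sigma_{l+2}/\sigma_{l+1}$ and then only say you ``expect'' the third-derivative quadratic form to have a sign, and ``would try'' a rotation in the bad directions.

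The sources of sign you point to are also not enough. Concavity of $\sigma_2^{1/2}$, i.e.\ convexity of $\sigma_2(A^{-1})^{1/2}$, only handles points where $Dv=0$. When $p=Dv\neq0$, set $\alpha=p/|p|$ and $Q_\alpha=I-\alpha\alpha^T$. In the Bian--Guan form the operator is $F(A^{-1},p)=\sigma_2(A^{-1})-|p|^2\operatorname{Tr}(Q_\alpha A^{-1})-\lambda$. This is a convex function of $A$ minus the convex function $|p|^2\operatorname{Tr}(Q_\alpha A^{-1})$, and it is in general not convex. For example, along $A=tI$ its second derivative is negative for large $t$. So the standard microscopic convexity condition is unavailable, and this is exactly where a direct computation gets stuck.

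The missing idea is the structural sublevel-set condition used in the paper. By the level-set version of the Bian--Guan theorem, constant rank follows from three facts:
\begin{itemize}
\item ellipticity, $F^{ij}=T_1(D^2v-Dv\otimes Dv)>0$, which holds on $\Gamma_2$;
\item nondegeneracy, $F(0,p)=-\lambda\neq0$;
\item convexity of $\{A\in S^n_{++}:F(A^{-1},p)\le0\}$ for each fixed $p$.
\end{itemize}
For the third, divide by $\operatorname{Tr}(Q_\alpha A^{-1})>0$. The set becomes $\{(\sigma_2(A^{-1})-\lambda)/\operatorname{Tr}(Q_\alpha A^{-1})\le|p|^2\}$. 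The function on the left is convex for two reasons:
\begin{itemize}
\item $\operatorname{Tr}(Q_\alpha A^{-1})/\sigma_2(A^{-1})$ is positive and concave, so its reciprocal is convex;
\item $1/\operatorname{Tr}(Q_\alpha A^{-1})$ is concave.
\end{itemize}
The first concavity is the paper's key algebraic step. One rewrites $\operatorname{Tr}(Q_\alpha A^{-1})/\sigma_2(A^{-1})=\operatorname{Tr}(Q_\alpha\operatorname{adj}A)/\sigma_{n-2}(A)=g/D_Rg$. Here $g=D_{Q_\alpha}\det$ is hyperbolic and $R=\frac12(I+\alpha\alpha^T)$, so G{\aa}rding's quotient concavity applies. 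The case $p=0$ then follows by writing its sublevel set as an intersection of the convex sets for $p=\varepsilon\alpha$.

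Without this, or some other verification of the sign of your quadratic form when $p\neq0$, the closedness step of your continuity argument is unsupported, and the theorem is not established.
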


\begin{theorem}\label{thm:bm}
Let $n\geq4$, and $t\in[0,1]$. Suppose $K_{0},K_{1}$ are bounded uniformly convex smooth domains in $\mathbb{R}^{n}$. Then the functional $\lambda$ satisfies the inequality
\begin{equation}\label{eq:bm-s2}
\lambda_2\bigl((1-t)K_{0}+tK_{1}\bigr)^{-1/4}\geq (1-t)\lambda_2(K_{0})^{-1/4}+t\lambda_2(K_{1})^{-1/4}.
\end{equation}
Moreover, equality holds if and only if $K_{0}$ and $K_1$ are homothetic.
\end{theorem}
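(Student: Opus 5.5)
We will prove Theorem \ref{thm:bm} by deducing it from the strict log-concavity in Theorem \ref{thm:strict-convex}, using the method of Colesanti and Salani (the Borell--Brascamp--Lieb approach via the variational characterization \eqref{eq:lambda-def}).

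Fix $t\in(0,1)$ and let $K_t=(1-t)K_0+tK_1$. Let $u_0,u_1$ be eigenfunctions for $K_0,K_1$, normalized so that $\min u_i=-1$. By Theorem \ref{thm:strict-convex}, the functions $v_i=-\log(-u_i)$ are convex. Set
$$
\mu_i=\lambda_2(K_i)^{-1/4},\qquad \mu_t=(1-t)\mu_0+t\mu_1 .
$$
We then build a competitor on $K_t$ by a rescaled infimal convolution. For $x\in K_t$, define
$$
w(x)=\inf\Bigl\{(1-t)\tilde v_0(x_0)+t\tilde v_1(x_1):\ x=(1-t)x_0+tx_1,\ x_i\in K_i\Bigr\},
$$
where $\tilde v_i$ are suitable rescalings: each $u_i$ is replaced by the eigenfunction of the dilate $K_i/\mu_i$, which has $\lambda_2=1$. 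This makes the problem scale-invariant, so it suffices to compare the homogeneous quantities.

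Set $\bar u=-e^{-w}$. Then $\bar u$ vanishes on $\partial K_t$ and is log-concave, hence quasi-concave. At each point, the infimum is attained at an interior pair $(x_0,x_1)$, where $Dv_0(x_0)=Dv_1(x_1)=Dw(x)$. The Hessian $D^2w(x)$ is the weighted harmonic-type combination
$$
D^2w(x)=\Bigl((1-t)\,(D^2v_0)^{-1}+t\,(D^2v_1)^{-1}\Bigr)^{-1},
$$
which is well defined precisely because $D^2v_i>0$ by Theorem \ref{thm:strict-convex}.

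The key computation is a pointwise inequality showing that $\bar u$ is a subsolution:
$$
\sigma_2(D^2\bar u)\ \ge\ \mu_t^{-4}\,(-\bar u)^2 .
$$
Writing $\sigma_2(D^2u)/u^2=\sigma_2(Dv\otimes Dv-D^2v)$ in terms of $v$, this reduces to a concavity property of the map
$$
A\mapsto \sigma_2\bigl(p\otimes p-A^{-1}\bigr)^{1/4}
$$
for fixed gradient $p$, with respect to the inverse-harmonic combination above. This concavity should follow from the concavity of $\sigma_2^{1/2}$ on the Gårding cone together with the monotonicity of the inverse.

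From the subsolution property, a comparison argument for $\sigma_2$ eigenvalues (Wang \cite{wang1994class}), or equivalently plugging $\bar u$ into \eqref{eq:lambda-def} after checking $\bar u\in\Phi_0^2$, gives $\lambda_2(K_t)\le\mu_t^{-4}$. This is exactly \eqref{eq:bm-s2}.

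The main obstacle is twofold. First, one must verify that $\bar u$ is $2$-convex and of class $C^2$, or else handle it in the viscosity sense. Second, one must prove the matrix concavity inequality. For the latter I would first diagonalize using the common gradient and try to reduce it to the scalar inequality $\sigma_2^{1/2}$ concave, as in Salani's Monge--Ampère argument.

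For the equality case, suppose equality holds in \eqref{eq:bm-s2}. Then $\bar u$ is a multiple of the eigenfunction of $K_t$ by uniqueness, so equality holds in the concavity inequality at every point. Strict convexity ($D^2v_i>0$) forces the eigenvalue structure to coincide, so that
$$
(D^2v_0(x_0))^{-1}\ \text{ is proportional to }\ (D^2v_1(x_1))^{-1}
$$
at corresponding points, with the constant determined by $\mu_0/\mu_1$. Using $Dv_0(x_0)=Dv_1(x_1)$ and integrating along the gradient map, we get $v_1(x)=v_0\bigl((x-b)/c\bigr)+\text{const}$. Hence $K_1=cK_0+b$, i.e. $K_0$ and $K_1$ are homothetic. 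The converse direction follows from homogeneity: $\lambda_2$ scales with order $-2k=-4$ and is translation invariant.
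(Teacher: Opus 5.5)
\textbf{Gap 1: the key pointwise inequality is misstated.} Your architecture is the paper's: log transform, infimal convolution of $v_0,v_1$, the harmonic-mean formula for $D^2w$, $\bar u=-e^{-w}$ as a test function in \eqref{eq:lambda-def}, then rescaling. The key step, however, is wrong as stated.

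\begin{itemize}
\item \emph{Direction.} Since $\lambda_2(K_t)$ is an \emph{infimum} of the Rayleigh quotient, you need the \emph{upper} bound $\sigma_2(D^2\bar u)\le\Lambda(-\bar u)^2$, i.e.\ $\sigma_2(D^2w-Dw\otimes Dw)\le\Lambda$. Your inequality $\sigma_2(D^2\bar u)\ge\mu_t^{-4}(-\bar u)^2$ bounds the quotient of $\bar u$ from below, which gives nothing. A comparison argument with such a subsolution would, if anything, bound $\lambda_2(K_t)$ from below.
\item \emph{The proposed concavity is false.} For $p=0$, the map $A\mapsto\sigma_2(A^{-1})^{1/4}$ is positive and homogeneous of degree $-1/2$, hence strictly convex along rays.
\item \emph{You cannot get $\mu_t^{-4}$ directly, even with exponent $-1/4$.} For $p\neq0$, $h(A)=\sigma_2(A^{-1}-p\otimes p)\to0^+$ along $A=aI$ as $a\uparrow n/(2|p|^2)$, with $A^{-1}-p\otimes p$ staying in $\Gamma_2$. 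So $h^{-1/4}$ blows up on a bounded segment and cannot be concave.
\end{itemize}

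What the paper does instead is prove only $\sigma_2(D^2w-Dw\otimes Dw)\le\max_i\lambda_2(K_i)$, i.e.\ $\lambda_2(K_t)\le\max_i\lambda_2(K_i)$. This amounts to convexity of the sublevel sets $\{A\in S^n_{++}:\sigma_2(A^{-1}-p\otimes p)\le\Lambda\}$. It then gets \eqref{eq:bm-s2} by applying this to $K_i/\mu_i$ with $t'=t\mu_1/\mu_t$. Your rescaled convolution, written over $K_i$ with weights $1-t,t$, mixes these two set-ups.

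\textbf{Gap 2: the matrix inequality itself is not proved, and your route cannot work.} You give no proof of the sublevel-set convexity for $p\neq0$. Diagonalizing fails because $A_0$, $A_1$ and $p$ are not simultaneously diagonalizable. In the Monge--Amp\`ere case, $\det(A^{-1}-p\otimes p)=\det(A^{-1})(1-p^TAp)$ with $p^TAp$ linear in $A$. Here $\sigma_2(A^{-1}-p\otimes p)=\sigma_2(A^{-1})-|p|^2\operatorname{Tr}(Q_\alpha A^{-1})$ has no such factorization. So concavity of $\sigma_2^{1/2}$ plus monotonicity of the inverse is not enough.

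This is exactly the new ingredient of the paper, Proposition \ref{thm:main}:
\begin{itemize}
\item $\Phi(A)=\operatorname{Tr}(Q_\alpha A^{-1})/\sigma_2(A^{-1})$ is concave. The proof writes $\Phi=g/D_Rg$ with $g=D_{Q_\alpha}\det$ hyperbolic and $R=\frac12(I+\alpha\alpha^T)$, then applies G{\aa}rding's quotient concavity.
\item Combined with concavity of $1/\operatorname{Tr}(Q_\alpha A^{-1})$, this gives the needed bound at points with $Dw\neq0$.
\item The point with $Dw=0$ is handled by convexity of $\sigma_2(A^{-1})^{1/2}$.
\end{itemize}

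\textbf{Gap 3: the equality case has no mechanism.} Saying that $D^2v_i>0$ ``forces the eigenvalue structure to coincide'' is not an argument. In the paper, the proportionality $(D^2v_0)^{-1}=c\,(D^2v_1)^{-1}$ comes from \emph{strict} convexity of $D_Rg/g$. That follows, via Bauschke--G{\"u}ler--Lewis--Sendov, from completeness of $g$. The paper proves $g$ is complete for $n\ge4$ and shows it is not complete for $n=3$. Your proposal never uses the dimension at all.
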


%With this result we will prove the Brunn--Minkowski inequality for the positive eigenvalue of $S_{2}$ operator. Our method is from Colesanti \cite{colesanti2005brunn} and Salani \cite{salani2005brunn}.For the case of n=3, please refer to \cite{liumaxu2010cc}, \cite{salani2012cc} and \cite{ccq2025cc}.

Our method follows the lines of \cite{colesanti2005brunn, ma2008convexity, liumaxu2010cc, salani2012cc}, and it is based on a crucial algebraic property, see Proposition \ref{thm:main}, which establishes the concavity of a functional over the space of positive symmetric matrices. 
\medskip

Notice that the method is equally effective in treating more generic Dirichelt problems for Hessian equations and related functionals. In particular, as an example of applications, we  obtain the following theorem, which generalize to $n\geq 4$ the similar result from  \cite{ma2008convexity, liumaxu2010cc, salani2012cc}.
\begin{theorem}\label{thm:mainssss}
Suppose $u\in C^{\infty}(\Omega)$ is the admissible solution of \begin{equation}\label{eq:main-s2}
\sigma_{2}(\lambda(D^{2}u))=1\quad\text{in }\Omega,
\qquad
u=0\quad\text{on }\partial\Omega.
\end{equation}
where $\Omega$ is a bounded uniformly convex smooth domain in $\mathbb{R}^{n}$, $n\geq4$. Then
\[
v:=-( -u)^{1/2}
\]
is strictly convex, and the power $1/2$ is sharp.
\end{theorem}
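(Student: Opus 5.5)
We prove Theorem \ref{thm:mainssss} by the deformation method combined with a constant rank theorem, following \cite{ma2008convexity, liumaxu2010cc, salani2012cc}. First we rewrite the equation for $v=-(-u)^{1/2}$. Set $u=-v^2$, so that $v<0$ in $\Omega$, $v=0$ on $\partial\Omega$. Then
\[
u_{ij}=-2v\,v_{ij}-2v_iv_j ,
\]
and \eqref{eq:main-s2} becomes a fully nonlinear equation $F(D^2v,Dv,v)=\sigma_2\bigl(-2vD^2v-2Dv\otimes Dv\bigr)=1$. Since $-v>0$, this is equivalent to
\[
\sigma_2\Bigl(D^2v+\tfrac{1}{v}Dv\otimes Dv\Bigr)=\tfrac{1}{4v^2}.
\]

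\textbf{Step 1 (boundary behaviour).} Near $\partial\Omega$ we have $|Du|>0$ by the Hopf lemma, and $v\sim -c\,d(x)^{1/2}$. Thus $D^2v$ blows up like $d^{-3/2}$ in the normal direction. The tangential part of $D^2v$ is controlled by $|Du|(-u)^{-1/2}$ times the second fundamental form, so uniform convexity of $\Omega$ gives $D^2v>0$ in a neighbourhood of $\partial\Omega$.

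\textbf{Step 2 (deformation).} Deform $\Omega_t$, $t\in[0,1]$, from a ball $\Omega_0$ to $\Omega_1=\Omega$ through uniformly convex smooth domains, for instance by Minkowski combination. Let $u_t$ be the corresponding solutions and $v_t=-(-u_t)^{1/2}$. For the ball, $u_0$ is a quadratic: $\sigma_2(cI)=\binom n2c^2=1$ gives $u_0=\tfrac c2(|x|^2-R^2)$. Hence $v_0=-\sqrt{c/2}\,(R^2-|x|^2)^{1/2}$, which is strictly convex. By continuity of the solutions in $t$ (uniform $C^{2}$ estimates on compact sets) and the uniform boundary estimate of Step 1, if strict convexity failed for some $t$ there would be a first time $t_*$ at which $D^2v_{t_*}\ge0$ and $D^2 v_{t_*}$ is degenerate at an interior point.

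\textbf{Step 3 (constant rank theorem).} The key step, and the expected main obstacle, is to show that a convex solution $v$ of the transformed equation has $D^2v$ of constant rank in $\Omega$. Let $l$ be the minimal rank and $\phi=\sigma_{l+1}(D^2v)$ (or $\sigma_{l+1}+\sigma_{l+2}/\sigma_{l+1}$). Near a minimal-rank point one shows
\[
F^{ij}\phi_{ij}\le C(\phi+|D\phi|) \pmod{\text{good terms}},
\]
and the strong maximum principle then forces $\phi\equiv0$. In the standard computation the required sign reduces to concavity of the map
\[
A\mapsto -\frac{1}{\sigma_2(A^{-1})^{1/2}}\quad\text{or}\quad A\mapsto \sigma_2(A^{-1})^{-1/2},
\]
together with a structural condition on the dependence on $v$ and $Dv$. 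This is the condition of Bian--Guan/Korevaar--Lewis type. For this we invoke the algebraic concavity result, Proposition \ref{thm:main}, on positive definite matrices. It plays exactly the role that the $n=3$ computation played in \cite{liumaxu2010cc}, and it is where the restriction to $k=2$ and the exponent $1/2$ enter. Granting it, constant rank gives rank $D^2v_{t_*}\equiv l<n$ in $\Omega$. This contradicts Step 1, where $D^2v>0$ near the boundary. Hence $D^2v_t>0$ for all $t$, and in particular $v$ is strictly convex.

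\textbf{Step 4 (sharpness).} To show that the power $1/2$ is optimal, take $\Omega$ close to a thin slab or cylinder. Consider $u(x)=a(x_1^2-1)$ restricted to a very long domain, or a one-dimensional-in-the-limit profile. Near the flat parts, $-(-u)^{\beta}$ with $\beta>1/2$ fails to be convex: concavity of $(1-x_1^2)^{\beta}$ holds only for $\beta\le1$, but the boundary asymptotics $u\sim d$ combined with an interior profile show that $\beta>1/2$ loses convexity. Concretely, one checks along a boundary point of vanishing curvature limit that $\partial_{\tau\tau}(-(-u)^{\beta})\approx\beta(-u)^{\beta-1}u_{\tau\tau}+\dots$, whose sign is dominated by negative mean-curvature-type terms unless $\beta\le1/2$. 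We would carry this out by an explicit approximating family of ellipsoids with one axis degenerating.
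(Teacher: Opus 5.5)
Your architecture (convexity in a boundary strip, deformation from a ball, then a constant rank theorem at the first degenerate time) matches the paper's. The paper takes the deformation from \cite{ma2008convexity} and only proves the constant rank theorem. But that step is the one you leave as ``Granting it'', and your description of what it needs is inaccurate.

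Theorem \ref{thm:bg-level} requires, for each fixed $p$, that the set $\{(A,z)\in S^n_{++}\times(-\infty,0): F(A^{-1},p,z)\le 0\}$ be convex \emph{jointly in $(A,z)$}. Here $F(r,p,z)=z^2\sigma_2(r)+z\operatorname{Tr}(P(p)r)-\frac14$ and $P(p)=|p|^2I-p\otimes p$.

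\begin{itemize}
\item Concavity of $A\mapsto\sigma_2(A^{-1})^{-1/2}$ only settles $p=0$. Also, $A\mapsto-\sigma_2(A^{-1})^{-1/2}$ is convex, not concave, so your two alternatives are not interchangeable.
\item For $p\neq0$, Proposition \ref{thm:main} alone does not suffice, because the constant $\frac14$ destroys homogeneity.
\item The paper sets $s=-z$, $B=A/s$, $\alpha=p/|p|$ and $Q_\alpha=I-\alpha\alpha^T$. The condition then becomes $(\sigma_2(B^{-1})-\frac14)/\operatorname{Tr}(Q_\alpha B^{-1})\le|p|^2$.
\item Convexity of this quotient is Theorem \ref{thm1}. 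It combines the reciprocal of Proposition \ref{thm:main} (via Lemma \ref{lem:inverse-convex}) with the separate concavity of $B\mapsto 1/\operatorname{Tr}(Q_\alpha B^{-1})$ (Proposition \ref{prop:matrix-convex}).
\item Joint convexity in $(A,z)$ then follows from $A_\theta/s_\theta=\frac{\theta s_1}{s_\theta}B_1+\frac{(1-\theta)s_2}{s_\theta}B_2$, where $s_\theta=-z_\theta$.
\item The case $p=0$ can also be recovered as the intersection over $p_\varepsilon=\varepsilon\alpha$.
\end{itemize}

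None of this appears in your proposal, so the core of the theorem is unproved.

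Your sharpness argument (Step 4) also fails.

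\begin{itemize}
\item The function $a(x_1^2-1)$ has a rank-one Hessian, so $\sigma_2=0$ and it is not a solution.
\item On any ellipsoid $\{\sum x_i^2/a_i^2<1\}$ the solution is exactly $c(\sum x_i^2/a_i^2-1)$, which has a constant positive definite Hessian. Hence $-(-u)^\beta$ is convex for \emph{every} $\beta\in(0,1]$, however the axes degenerate.
\item Lemma \ref{lem:boundary} with $f(u)=-(-u)^\beta$ gives $f'/f''=(-u)/(1-\beta)\to0$. So convexity near $\partial\Omega$ holds for every $\beta<1$, and boundary asymptotics cannot detect the exponent $\frac12$.
\end{itemize}

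The paper gives no sharpness proof of its own; it defers to \cite{ma2008convexity}. A test case that works is a thin cone. Write $x=(x',x_n)$ and take $0<b<\frac{n-2}{2}a$. Then $q=\frac a2|x'|^2-\frac b2x_n^2$ has Hessian eigenvalues $(a,\dots,a,-b)\in\Gamma_2$ and constant $\sigma_2$, which you can normalize to $1$. Moreover $q<0$ in the convex cone $\{|x'|<\sqrt{b/a}\,x_n\}$.

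Now take smooth uniformly convex $\Omega_j$ inside the truncated cone, each containing a fixed ball around $\frac12e_n$, with $t_je_n\in\partial\Omega_j$ and $t_j\to0$.

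\begin{itemize}
\item Comparison with $q$ gives $-u_j(te_n)\le\frac b2t^2$.
\item Comparison with the ball solution gives $-u_j(\frac12e_n)\ge c_0>0$.
\item Concavity of $(-u_j)^\beta$ on the segment from $t_je_n$ to $\frac12e_n$ would force $-u_j(te_n)\ge c_0\bigl(\frac{t-t_j}{1/2-t_j}\bigr)^{1/\beta}$.
\end{itemize}

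Letting $j\to\infty$ and then $t\to0$ is impossible when $\beta>\frac12$.
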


Of course, we can also obtain a Brunn-Minkowski inequality for the corresponding {\em $k$-torsional rigidity}
$$
\tau_2(\Omega)=\left(\int_\Omega u\,dx\right)^2\,,
$$
where $u$ is the solution of \eqref{eq:main-s2}. That is,
{\em $\tau_2^{1/(2n+4)}$ is concave with respect to Minkowski addition of convex sets}. And from this, with a standard argument (see \cite{salani2012cc}), we can eventually get an Uhryson's type inequality for $\tau_2$, telling that its value is maximized by balls among convex sets with given mean width.

%The paper is organized as follows.
%In Section 2, we explain the strategy to prove Theorem \ref{thm:strict-convex} and Theorem \ref{thm:bm}. In Section 3, we prove a crucial lemma to conclude the proofs of the two theorems. In Section 4 we give further applications, like Theorem \ref{thm:mainssss}.

\section*{Acknowledgments}
Xinan Ma and Jiahuan Li were supported by the National Natural Science Foundation of China [grant number 2025YFA1017601]. 
Paolo Salani was partially supported by INdAM through GNAMPA and by the project "Geometric-Analytic Methods for PDEs and Applications (GAMPA)", funded by European Union - Next Generation EU  within the PRIN 2022 program (D.D. 104 - 02/02/2022 Ministero dell’Universit\`a e della Ricerca). 

\section{A crucial convexity property}

In this section we establish a crucial property for our method, Proposition \ref{thm:main}. Before, we recall some standard results about hyperbolic polynomials.

\subsection{G{\aa}rding's Quotient Concavity Theorem}

\begin{definition}
Let $p$ be a homogeneous real polynomial on a real vector space $V$. If there exists a direction $e\in V$ such that $p(e)>0$ and, for every $x\in V$, the one-variable polynomial
\[
t\mapsto p(x+te)
\]
has only real roots, then $p$ is called hyperbolic with respect to $e$. The connected component of $\{x:p(x)>0\}$ containing $e$ is called the hyperbolicity cone of $p$ and is denoted by $\Gamma_{p}$.
\end{definition}

The most important example here is $p(A)=\det A$, which is hyperbolic with respect to $I$, and whose hyperbolicity cone is exactly $S^{n}_{++}$.

\begin{theorem}[G{\aa}rding quotient concavity \cite{garding1959inequality,bauschke2001hyperbolic}]\label{thm:garding}
Let $p$ be a homogeneous hyperbolic polynomial of degree $m$, with hyperbolicity cone $\Gamma_{p}$. If $e\in\Gamma_{p}$, then
\[
x\mapsto \frac{p(x)}{D_{e}p(x)}
\]
is concave on $\Gamma_{p}$.
\end{theorem}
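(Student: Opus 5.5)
The plan is to reduce Gårding's theorem to a one-variable statement along lines in the cone, and then use the real-rootedness of $t\mapsto p(x+te)$ to write $p/D_e p$ as an explicit harmonic-mean-type expression.

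\emph{Step 1 (one-variable representation).} Fix $x\in\Gamma_p$. By hyperbolicity,
\[
p(x+te)=p(e)\prod_{i=1}^m (t+\lambda_i(x)),
\]
where $-\lambda_i(x)$ are the real roots. The standard fact that $\Gamma_p$ consists exactly of the points with all $\lambda_i(x)>0$ gives $\lambda_i(x)>0$ for $x\in\Gamma_p$. Differentiating at $t=0$,
\[
D_ep(x)=p(e)\sum_j\prod_{i\neq j}\lambda_i(x),
\qquad\text{hence}\qquad
\frac{p(x)}{D_ep(x)}=\Bigl(\sum_{i=1}^m \frac{1}{\lambda_i(x)}\Bigr)^{-1}.
\]
So the quotient equals $f(\lambda(x))$, where $f(\lambda)=(\sum_i\lambda_i^{-1})^{-1}$. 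This $f$ is concave, symmetric and increasing on the positive orthant. Concavity holds because $f$ is $1/m$ times the harmonic mean, and the harmonic mean is concave: $f(\lambda)=\min_{w}\sum_i w_i^2\lambda_i$ over $\sum_i w_i=1$, a minimum of linear functions.

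\emph{Step 2 (concavity of $F=f\circ\lambda$ along lines).} It suffices to show that $t\mapsto F(x+sy)$ is concave near each $s$, for every $x\in\Gamma_p$ and every direction $y$. I would invoke the Bauschke--Güler--Lewis--Sendov result \cite{bauschke2001hyperbolic}: if $f$ is symmetric and concave, then $f\circ\lambda$ is concave on the hyperbolicity cone. That result is itself proved via Gårding's convexity of $\Gamma_p$ and the majorization inequality $\lambda(x+y)\prec\lambda(x)+\lambda(y)$, which follows from the concavity of the sum of the $k$ smallest eigenvalues. Alternatively, for a self-contained route, I would work directly with the two-variable polynomial $q(s,t)=p(x+sy+te)$, which is hyperbolic in $t$. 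Its roots $-\lambda_i(x+sy)$ can be chosen analytic in $s$ (by Rellich/Nuij-type arguments for hyperbolic polynomials). One then computes $\frac{d^2}{ds^2}\sum_i \lambda_i^{-1}$ and aims to show that $(\sum_i\lambda_i^{-1})^{-1}$ has a nonpositive second derivative.

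\emph{Main obstacle.} The hard step is Step 2, namely controlling the second-order variation of the roots $\lambda_i(x+sy)$. In the matrix case this is the classical fact that $\det$ is hyperbolic and that $\sum_i 1/\lambda_i=\operatorname{tr}(A^{-1})$ is convex on $S^n_{++}$, which is immediate. For general $p$ one needs either the majorization machinery or Gårding's original argument, which proceeds by induction on the degree $m$ via the hyperbolicity of $D_ep$ and the interlacing of roots of $p$ and $D_ep$. I would follow Gårding's induction. The case $m=1$ is trivial, since then $p/D_ep$ is linear. For the inductive step I would use the identity
\[
\frac{p}{D_ep}=\frac{1}{\sum_i 1/\lambda_i}
\]
together with the fact that $D_ep$ is hyperbolic with a cone containing $\Gamma_p$. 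Since this is a classical theorem, in the paper one may simply cite \cite{garding1959inequality,bauschke2001hyperbolic}.
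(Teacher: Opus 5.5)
Your argument is correct, but it takes a different route from the one the paper indicates. The paper does not prove the statement. It cites G{\aa}rding and Bauschke--G\"uler--Lewis--Sendov, and its remark sketches a direct second-variation argument. Since $p/D_ep$ is homogeneous of degree one, its Hessian at $x$ annihilates $x$, so one may add a multiple of $x$ to the direction; the sign of the second derivative is then read off a G{\aa}rding--Hodge (Alexandrov--Fenchel type) inequality for the polarization of $p$.

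You instead pass to the characteristic roots with respect to $e$. You note that $p/D_ep=(\sum_i\lambda_i^{-1})^{-1}$ is a symmetric concave function of $\lambda(x)$ on the positive orthant. You then transfer concavity to $\Gamma_p$ through BGLS, i.e.\ through $\lambda(sx+(1-s)y)\prec s\lambda(x)+(1-s)\lambda(y)$ together with Schur-concavity.

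What your route buys is transparency and generality. The same argument gives concavity of every quotient $D_e^jp/D_e^{j+1}p=\frac{1}{j+1}\sigma_{m-j}(\lambda)/\sigma_{m-j-1}(\lambda)$. Its cost is that the real content is outsourced: to G{\aa}rding's theory, and to the BGLS majorization theorem. That theorem's proof needs G{\aa}rding's cone convexity for the auxiliary polynomial $\prod_{|I|=k}\sum_{i\in I}\lambda_i(x)$, not merely convexity of $\Gamma_p$. The paper's sketch stays with $p$ and its polarization, closer to G{\aa}rding's method and to mixed discriminants. However, it leaves the needed Hodge-type inequality unproved, so as written it is only a heuristic.

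Two points to tighten.

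\emph{Hyperbolicity in the direction $e$.} Writing $p(x+te)=p(e)\prod_i(t+\lambda_i(x))$ with real $\lambda_i$ uses G{\aa}rding's theorem that $p$ is hyperbolic with respect to every $e\in\Gamma_p$, with the same cone. So does identifying $\Gamma_p$ with $\{x:\lambda_i(x)>0\ \forall i\}$. The definition only gives hyperbolicity in the original direction, so state this explicitly. Then apply BGLS with $e$ as the hyperbolic direction. Since their theorem is phrased for convex $f$, apply it to $-f$, or simply run the majorization and Schur-concavity directly on the orthant.

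\emph{The ``self-contained'' alternatives.} These are not arguments as written. The analytic-root computation stops exactly where $\lambda_i''$ must be controlled. In the sketched induction, neither the root identity nor the hyperbolicity of $D_ep$ converts concavity of $D_ep/D_e^2p$ into concavity of $p/D_ep$ without a further mechanism that you do not supply. Present the BGLS route as the proof and drop the other two.
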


\begin{remark}
This theorem is the abstract version of a direct second derivative proof of concavity. Since $p$ has degree $m$ and $D_{e}p$ has degree $m-1$, the quotient is homogeneous of degree one. Thus one only needs to verify that its second variation is nonpositive. After normalizing the direction by adding a suitable multiple of the base point, the sign of the second derivative is controlled by a G{\aa}rding--Hodge type inequality. For $p=\det$, this is the mixed-discriminant form of the Alexandrov--Fenchel inequality.
\end{remark}

We also need the standard fact that directional derivatives preserve hyperbolicity.

\begin{theorem}[\cite{renegar2006hyperbolic})]\label{thm:directional-hyperbolic}
Let $p$ be hyperbolic with respect to $e$, with hyperbolicity cone $\Gamma_{p}$. If $v\in \Gamma_{p}$, then $D_{v}p$ is again hyperbolic, and its hyperbolicity cone contains $\Gamma_{p}$.
\end{theorem}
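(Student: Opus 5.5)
We prove Theorem \ref{thm:directional-hyperbolic}, that derivatives in cone directions preserve hyperbolicity. The approach is to reduce everything to one-variable polynomials along lines and to use the classical fact that the derivative of a real-rooted polynomial is real-rooted, together with interlacing.

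First we fix notation. Let $m=\deg p$ and $v\in\Gamma_p$. By a standard fact about hyperbolic polynomials (Gårding), $p$ is also hyperbolic with respect to every $v\in\Gamma_p$, and the hyperbolicity cone computed from $v$ equals $\Gamma_p$. We take this as known; it is usually proved with a continuity/Hurwitz argument on the roots of $t\mapsto p(x+t(se+(1-s)v))$. So we may replace $e$ by $v$ and work directly with $p$ hyperbolic with respect to $v$.

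Next we check hyperbolicity of $D_vp$ with respect to $v$. For fixed $x$, set $f(t)=p(x+tv)$. This has degree exactly $m$, with leading coefficient $p(v)>0$, and all its roots are real. Then
\[
D_vp(x+tv)=\frac{d}{dt}\,p(x+tv)=f'(t).
\]
By Rolle's theorem, counting multiplicities, $f'$ has $m-1$ real roots, and they interlace those of $f$. Moreover $D_vp(v)=\frac{d}{dt}\big|_{t=0}p(v+tv)=\frac{d}{dt}\big|_{t=0}(1+t)^m p(v)=m\,p(v)>0$. Hence $D_vp$ is homogeneous of degree $m-1$ and hyperbolic with respect to $v$.

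Finally we show the cone inclusion. Recall that $x\in\Gamma_p$ if and only if all roots of $t\mapsto p(x+tv)$ are strictly negative (equivalently, $p(x+sv)>0$ for all $s\ge0$). The same characterization holds for $D_vp$. Take $x\in\Gamma_p$. Then all roots $t_1\le\dots\le t_m$ of $f$ are negative. By interlacing, the roots of $f'$ lie in $[t_1,t_m]$, so they are also negative. Hence $x\in\Gamma_{D_vp}$.

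The main obstacle is the root characterization of the cone together with the independence of the cone from the chosen direction in $\Gamma_p$. These are standard facts from Gårding and Renegar, and we cite them rather than reprove them. The key tool is a degree/continuity argument: along the segment between $e$ and $v$, roots cannot cross zero without $p$ vanishing inside the connected cone.
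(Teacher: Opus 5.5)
Your proof is correct and takes the same route as the paper's one-line sketch: Rolle's theorem applied to $t\mapsto p(x+tv)$, whose $t$-derivative is exactly $D_vp(x+tv)$. You also make explicit two points the sketch leaves implicit: G{\aa}rding's theorem is needed to use $v$ rather than $e$ as the hyperbolicity direction, and $\Gamma_p\subseteq\Gamma_{D_vp}$ holds because the roots of $f'$ lie in $[t_1,t_m]\subset(-\infty,0)$ (the paper's further remark about boundary directions $v\in\partial\Gamma_p$, handled by approximation, goes beyond the statement as given).
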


This follows from Rolle's theorem: along every line one obtains a real-rooted one-variable polynomial, and its derivative is again real-rooted. Directions in the closed cone are obtained by approximation from interior directions. See also \cite{renegar2006hyperbolic} for the role of derivative relaxations of hyperbolic polynomials.

\subsection{A crucial lemma}

We work on the cone of real symmetric positive definite matrices
\[
S^{n}_{++}=\{A=A^{T}>0\}.
\]
For a real symmetric matrix $M$, set
\[
\sigma_{2}(M)=\sum_{1\leq i<j\leq n}\lambda_{i}(M)\lambda_{j}(M),
\]
where $\lambda_{1}(M),\ldots,\lambda_{n}(M)$ are the eigenvalues of $M$. Equivalently, if
\[
\det(tI-M)=t^{n}-\sigma_{1}(M)t^{n-1}+\sigma_{2}(M)t^{n-2}-\cdots+(-1)^{n}\sigma_{n}(M),
\]

\begin{proposition}\label{thm:main}
Let $n\geq2$, $A,B\in S^{n}_{++}$, and $\alpha\in\mathbb{R}^{n}$ with $|\alpha|=1$. Set
\[
P=I_{n}-\alpha\alpha^{T}.
\]
Then
\begin{equation}\label{eq:main-ineq}
\frac{\operatorname{Tr}(P(A+B)^{-1})}{\sigma_{2}((A+B)^{-1})}
\geq
\frac{\operatorname{Tr}(PA^{-1})}{\sigma_{2}(A^{-1})}
+\frac{\operatorname{Tr}(PB^{-1})}{\sigma_{2}(B^{-1})}.
\end{equation}
\end{proposition}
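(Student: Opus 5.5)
The plan is to reduce \eqref{eq:main-ineq} to a concavity statement and then obtain that concavity from G{\aa}rding's theory (Theorem \ref{thm:garding}, Theorem \ref{thm:directional-hyperbolic}). Put
\[
F(A)=\frac{\operatorname{Tr}(PA^{-1})}{\sigma_2(A^{-1})},\qquad A\in S^n_{++}.
\]
Under $A\mapsto sA$ the numerator scales by $s^{-1}$ and the denominator by $s^{-2}$, so $F$ is positively homogeneous of degree one. For such a function, superadditivity $F(A+B)\ge F(A)+F(B)$ is equivalent to concavity on the convex cone $S^n_{++}$, since $F(A+B)=2F(\tfrac{A+B}{2})$. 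So the whole task is to prove that $F$ is concave.

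Next I rewrite $F$ polynomially in $A$. I use $A^{-1}=\operatorname{adj}(A)/\det A$, together with $\operatorname{Tr}\operatorname{adj}A=\sigma_{n-1}(A)=D_I\det(A)$, $\alpha^T\operatorname{adj}(A)\alpha=D_{\alpha\alpha^T}\det(A)$, and $\sigma_2(A^{-1})=\sigma_{n-2}(A)/\det A$. Because the directional derivative is linear in the direction, this gives
\[
F(A)=\frac{D_P\det(A)}{\sigma_{n-2}(A)}=\frac{2\,D_P\det(A)}{D_ID_I\det(A)} .
\]
Here $P\ge 0$ lies in the closed hyperbolicity cone of $\det$. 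By Theorem \ref{thm:directional-hyperbolic}, $p:=D_P\det$ is hyperbolic of degree $n-1$ and its cone contains $S^n_{++}$. Theorem \ref{thm:garding} then gives concavity of $p/D_Ip=D_P\det/D_ID_P\det$ on $S^n_{++}$. The same theorem gives concavity of $\det/D_I\det$ and $D_I\det/D_I^2\det$.

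The main obstacle is that the denominator we actually have is $D_I^2\det$ and not $D_ID_P\det$. Since a product of concave functions need not be concave, $F$ is not directly one of G{\aa}rding's quotients. I would try two routes, in this order.

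\emph{First attempt (extended space).} Realize $F$ as a single G{\aa}rding quotient on $V=S^n\oplus\mathbb{R}$. I look for a homogeneous hyperbolic $q(A,s)$, built from $\det(A+sP)$ and its derivatives, such that $q(A,0)$ is proportional to $D_P\det(A)$ and $D_e q(A,0)$ is proportional to $D_I^2\det(A)$ for some $e$ in the cone of $q$. Restricting the concave function $q/D_eq$ to the slice $s=0$ would then give the result.

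\emph{Fallback (direct computation).} Take $\alpha=e_n$, so that $\operatorname{Tr}(PA^{-1})$ is the trace of the upper-left $(n-1)\times(n-1)$ block of $A^{-1}$. Diagonalize $A$ (the nonlinearity is unitarily invariant except for $P$) and compute the second variation $\frac{d^2}{dt^2}F(A+tH)$. Using homogeneity, I replace $H$ by $H-cA$ so that the first variation of the denominator vanishes. The sign condition then reduces to a G{\aa}rding--Hodge (Alexandrov--Fenchel type) inequality for mixed forms of $\det$, namely
\[
D_HD_H D_P\det\cdot D_P\det\le \bigl(D_HD_P\det\bigr)^2 \quad\text{for the polarized forms},
\]
combined with the elementary inequality $D_ID_P\det\cdot D_P\det\le\cdots$ comparing the two denominators. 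I expect this comparison step, which trades $D_ID_P$ for $D_I^2$, to be where the real work lies.

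Finally, the restriction $n\ge2$ is exactly what makes $\sigma_{n-2}$ and $D_P\det$ nontrivial. Strictness and equality cases are not needed here.
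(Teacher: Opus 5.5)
There is a genuine gap, and it lies exactly at the step you flagged as the main obstacle. Your reduction to concavity of $F$ is correct, and so is the rewriting $F=2\,D_P\det/D_I^2\det$. But neither of your two routes is carried out. In the extended-space attempt the polynomial $q(A,s)$ is never constructed. In the fallback, the decisive comparison between $D_ID_P\det$ and $D_I^2\det$ is left as ``$\cdots$''. As written, the proposal does not establish concavity of $F$, and the detour through an auxiliary comparison inequality is unnecessary.

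The missing idea is that the denominator already \emph{is} a derivative of $g:=D_P\det$ in a direction inside the cone; you only tried the direction $I$. Write $b=\alpha\alpha^T$, so that $P=I-b$.

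\begin{itemize}
\item The map $(U,V)\mapsto D_UD_V\det(A)$ is symmetric bilinear, so $D_{I+b}D_{I-b}\det=D_I^2\det-D_b^2\det$.
\item $D_b^2\det\equiv0$, because $t\mapsto\det(A+t\alpha\alpha^T)=\det A\,(1+t\,\alpha^TA^{-1}\alpha)$ is affine in $t$.
\item Hence $D_I^2\det=D_{I+b}\,g$, that is, $\sigma_{n-2}(A)=D_Rg(A)$ with $R=\tfrac12(I+\alpha\alpha^T)$.
\item $R$ has eigenvalues $1,\tfrac12,\dots,\tfrac12$, so it is positive definite and lies in the hyperbolicity cone of $g$.
\end{itemize}

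Thus $F=g/D_Rg$ is exactly a G{\aa}rding quotient. Theorem \ref{thm:garding}, applied to $g$ in the direction $R$, gives concavity at once. This is precisely the identification \eqref{eq:key-identification} on which the paper's proof rests. Once the right direction is chosen, your worry that a product of concave functions need not be concave no longer arises.
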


Define
\[
\Phi(A)=\frac{\operatorname{Tr}(PA^{-1})}{\sigma_{2}(A^{-1})}.
\]
Since $(tA)^{-1}=t^{-1}A^{-1}$,
\[
\operatorname{Tr}(P(tA)^{-1})=t^{-1}\operatorname{Tr}(PA^{-1}),
\qquad
\sigma_{2}((tA)^{-1})=t^{-2}\sigma_{2}(A^{-1}).
\]
Thus
\[
\Phi(tA)=t\Phi(A),\qquad t>0.
\]
Therefore $\Phi$ is homogeneous of degree one. If $\Phi$ is concave on $S^{n}_{++}$, then
\[
\Phi(A+B)=2\Phi\left(\frac{A+B}{2}\right)
\geq 2\left(\frac{\Phi(A)+\Phi(B)}{2}\right)
=\Phi(A)+\Phi(B),
\]
which is precisely \eqref{eq:main-ineq}. Thus the main task is to prove that $\Phi$ is concave on $S^{n}_{++}$.

\subsubsection{Eliminating the Inverse Matrix}

Let $\sigma_{k}(A)$ denote the $k$-th elementary symmetric polynomial of the eigenvalues of $A$. In particular,
\[
\sigma_{n}(A)=\det A.
\]

\begin{lemma}\label{lem:inverse-remove}
For every $A\in S^{n}_{++}$,
\[
\frac{\operatorname{Tr}(PA^{-1})}{\sigma_{2}(A^{-1})}
=
\frac{\operatorname{Tr}(P\,\operatorname{adj} A)}{\sigma_{n-2}(A)}.
\]
\end{lemma}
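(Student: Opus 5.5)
The plan is to use $A^{-1}=\operatorname{adj}A/\det A$ to rewrite numerator and denominator separately, and then watch the powers of $\det A$ cancel.

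\emph{Numerator.} Since $A\in S^{n}_{++}$ is invertible, we have $A^{-1}=(\det A)^{-1}\operatorname{adj}A$. By linearity of the trace,
\[
\operatorname{Tr}(PA^{-1})=\frac{\operatorname{Tr}(P\operatorname{adj}A)}{\det A}.
\]

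\emph{Denominator.} Here I would work with eigenvalues. Let $\lambda_1,\dots,\lambda_n>0$ be the eigenvalues of $A$, so that the eigenvalues of $A^{-1}$ are $\lambda_i^{-1}$. Then
\[
\sigma_2(A^{-1})=\sum_{i<j}\frac{1}{\lambda_i\lambda_j}
=\frac{1}{\lambda_1\cdots\lambda_n}\sum_{i<j}\prod_{l\ne i,j}\lambda_l
=\frac{\sigma_{n-2}(A)}{\det A}.
\]
The middle step is the identity $\sigma_k(A^{-1})=\sigma_{n-k}(A)/\sigma_n(A)$ with $k=2$. It follows because each product $\prod_{l\ne i,j}\lambda_l$ corresponds bijectively to the complementary $(n-2)$-subset of indices. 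For $n=2$ the convention $\sigma_0=1$ makes the formula read $1/\det A$, which is correct.

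\emph{Conclusion.} Dividing the two expressions, the factors of $\det A$ cancel and we obtain
\[
\frac{\operatorname{Tr}(PA^{-1})}{\sigma_2(A^{-1})}=\frac{\operatorname{Tr}(P\operatorname{adj}A)}{\sigma_{n-2}(A)}.
\]
This quotient is well defined because $\sigma_{n-2}(A)>0$ for positive definite $A$.

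Nothing here is a genuine obstacle. The only point requiring care is the complementary-index bookkeeping in the denominator, together with the degenerate case $n=2$.
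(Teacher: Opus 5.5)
Your proof is correct and is essentially the paper's argument: write $\operatorname{Tr}(PA^{-1})=\operatorname{Tr}(P\operatorname{adj}A)/\det A$, use eigenvalues to get $\sigma_2(A^{-1})=\sigma_{n-2}(A)/\det A$, then cancel $\det A$. Your remarks on the $n=2$ convention $\sigma_0=1$ and on the positivity of $\sigma_{n-2}(A)$ are small details the paper leaves implicit.
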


\begin{proof}
Let the eigenvalues of $A$ be $\lambda_{1},\ldots,\lambda_{n}>0$. Then the eigenvalues of $A^{-1}$ are $\lambda_{1}^{-1},\ldots,\lambda_{n}^{-1}$, and hence
\[
\sigma_{2}(A^{-1})=\sum_{1\leq i<j\leq n}\frac{1}{\lambda_{i}\lambda_{j}}
=\frac{\sigma_{n-2}(A)}{\det A}.
\]
On the other hand,
\[
A^{-1}=\frac{\operatorname{adj} A}{\det A},
\]
so
\[
\operatorname{Tr}(PA^{-1})=\frac{\operatorname{Tr}(P\,\operatorname{adj} A)}{\det A}.
\]
Dividing the two identities gives the result.
\end{proof}

Thus it remains to study
\[
\Phi(A)=\frac{\operatorname{Tr}(P\,\operatorname{adj} A)}{\sigma_{n-2}(A)}.
\]

\subsubsection{Writing the Numerator and Denominator as Derivatives of One Polynomial}

Set
\[
b=\alpha\alpha^{T},\qquad P=I-b,
\]
and define
\[
R=b+\frac12P=\frac12(I+b).
\]
Notice that $R$ is positive definite, since $b$ is a rank-one orthogonal projection and the eigenvalues of $R$ are $1,1/2,\ldots,1/2$.

Let
\[
g(A)=D_{P}\det(A),
\]
where $D_{P}$ denotes the directional derivative in the direction $P$:
\[
D_{P}\det(A)=\left.\frac{d}{dt}\det(A+tP)\right|_{t=0}.
\]

\begin{lemma}\label{lem:numerator}
For every $A\in S^{n}_{++}$,
\[
g(A)=\operatorname{Tr}(P\,\operatorname{adj} A).
\]
\end{lemma}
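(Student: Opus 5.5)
This is Jacobi's formula for the derivative of the determinant, followed by linearity of the trace. The plan is to compute $\det(A+tP)$ to first order in $t$, exploiting the invertibility of $A$.

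Since $A\in S^{n}_{++}$, $A$ is invertible, so we factor
\[
\det(A+tP)=\det A\,\det(I+tA^{-1}P).
\]
For a fixed matrix $M$, the expansion $\det(I+tM)=1+t\operatorname{Tr}M+O(t^{2})$ holds. Two routes give it:
\begin{itemize}
\item expand the characteristic polynomial, i.e.\ use $\det(I+tM)=\sum_{k}t^{k}\sigma_{k}(M)$, and keep only the linear term $\sigma_{1}(M)=\operatorname{Tr}M$;
\item alternatively, in the Leibniz expansion only the identity permutation contributes terms linear in $t$.
\end{itemize}

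Differentiating at $t=0$ then gives
\[
g(A)=\det A\,\operatorname{Tr}(A^{-1}P).
\]
Using $\det A\cdot A^{-1}=\operatorname{adj}A$ and the cyclicity of the trace, this becomes
\[
g(A)=\operatorname{Tr}(\operatorname{adj}A\,P)=\operatorname{Tr}(P\,\operatorname{adj}A).
\]

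There is no real obstacle here; the only point requiring care is the first-order expansion of $\det(I+tM)$, which is standard. One may also remark that the identity $D_{P}\det(A)=\operatorname{Tr}(P\operatorname{adj}A)$ holds for all matrices $A$ by density and polynomiality, though only $A\in S^{n}_{++}$ is needed.
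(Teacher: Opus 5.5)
Your proposal is correct and follows the same route as the paper: Jacobi's formula $D_P\det(A)=\det A\,\operatorname{Tr}(A^{-1}P)$, then $\det A\cdot A^{-1}=\operatorname{adj}A$. The only difference is that you also derive Jacobi's formula from the expansion $\det(I+tM)=1+t\operatorname{Tr}M+O(t^{2})$, which the paper simply quotes as known.
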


\begin{proof}
The first derivative formula for the determinant gives
\[
D_{P}\det(A)=\det A\,\operatorname{Tr}(A^{-1}P).
\]
Since $A^{-1}=(\operatorname{adj}A)/\det A$, this equals $\operatorname{Tr}(P\,\operatorname{adj}A)$.
\end{proof}

Next we show that the denominator is also a directional derivative of the same polynomial.

\begin{lemma}\label{lem:denominator}
For every $A\in S^{n}_{++}$,
\[
D_{R}g(A)=\sigma_{n-2}(A).
\]
\end{lemma}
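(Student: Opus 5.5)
The plan is to compute $D_{R}g(A)=D_{R}D_{P}\det(A)$ explicitly as a mixed second directional derivative of $\det$, in a basis adapted to $\alpha$. The mixed second derivative
\[
(X,Y)\mapsto D_{X}D_{Y}\det(A)=\left.\frac{\partial^{2}}{\partial s\,\partial t}\det(A+sX+tY)\right|_{s=t=0}
\]
is a symmetric bilinear form in $(X,Y)$. Since $R=b+\tfrac12P$, bilinearity gives
\[
D_{R}g(A)=D_{b}D_{P}\det(A)+\tfrac12\,D_{P}D_{P}\det(A).
\]

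\textbf{Reduction to $\alpha=e_{1}$.} Choose an orthogonal matrix $Q$ with $Q^{T}\alpha=e_{1}$. Because $\det(Q^{T}MQ)=\det M$ for every $M$, we have
\[
\det(A+sR+tP)=\det\bigl(Q^{T}AQ+sQ^{T}RQ+tQ^{T}PQ\bigr).
\]
Also $\sigma_{n-2}(Q^{T}AQ)=\sigma_{n-2}(A)$. Hence I may replace $A$ by $Q^{T}AQ$ and assume $\alpha=e_{1}$. In that case $b=E_{11}$ and $P=\sum_{j\geq2}E_{jj}$, where $E_{ii}$ is the matrix unit. The matrix $A$ is still an arbitrary symmetric matrix, and positive definiteness is not needed for this step.

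\textbf{Derivatives along diagonal units.} Using bilinearity again, everything reduces to the quantities $D_{E_{ii}}D_{E_{jj}}\det(A)$. Expanding $\det(A+sE_{ii}+tE_{jj})$ by multilinearity in the rows $i$ and $j$:
\begin{itemize}
\item for $i\neq j$, the coefficient of $st$ is the principal minor $M_{ij}(A)$, the determinant of $A$ with rows and columns $i$ and $j$ removed;
\item for $i=j$, the expression $\det(A+(s+t)E_{ii})$ is affine in $s+t$, so the mixed derivative is $0$.
\end{itemize}

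\textbf{Summing the two pieces.} The first piece is
\[
D_{b}D_{P}\det(A)=\sum_{j\geq2}M_{1j}(A).
\]
For the second piece, the diagonal terms vanish, so
\[
D_{P}D_{P}\det(A)=\sum_{i\neq j,\ i,j\geq2}M_{ij}(A)=2\sum_{2\leq i<j}M_{ij}(A).
\]
Therefore
\[
D_{R}g(A)=\sum_{j\geq2}M_{1j}(A)+\sum_{2\leq i<j}M_{ij}(A)=\sum_{1\leq i<j\leq n}M_{ij}(A).
\]
This is the sum of all principal $(n-2)\times(n-2)$ minors of $A$, which equals $\sigma_{n-2}(A)$.

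The argument is routine, and I do not expect a real obstacle. The step needing the most care is the exact bookkeeping of the mixed-derivative coefficient. The factor $\tfrac12$ in $R$ must cancel the factor $2$ coming from the ordered pairs in $D_{P}D_{P}$. This cancellation is exactly why $R$ was chosen as $b+\tfrac12P$.
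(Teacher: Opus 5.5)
Your proof is correct, but it takes a different route from the paper.

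The paper works coordinate-free with $X=A^{-1}$. It starts from the second-variation formula $D_UD_V\det(A)=\det A\,[\operatorname{Tr}(XU)\operatorname{Tr}(XV)-\operatorname{Tr}(XUXV)]$. Setting $s=\operatorname{Tr}X$ and $c=\alpha^TX\alpha$, the $c^2$ terms cancel and what remains is $\det A\,\sigma_2(A^{-1})$. It then converts this to $\sigma_{n-2}(A)$ using the eigenvalue identity $\sigma_2(A^{-1})=\sigma_{n-2}(A)/\det A$ from Lemma~\ref{lem:inverse-remove}.

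You instead split $D_R=D_b+\tfrac12D_P$ and rotate to $\alpha=e_1$. You then read off the mixed derivatives $D_{E_{ii}}D_{E_{jj}}\det(A)$ as principal minors $M_{ij}(A)$, zero when $i=j$. This makes $\sigma_{n-2}(A)$ appear directly as the sum of all principal $(n-2)\times(n-2)$ minors.

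What each approach buys:
\begin{itemize}
\item Your version never inverts $A$. It therefore gives the identity as a polynomial identity on all of $S^n$, which is the natural setting for the later hyperbolic-polynomial argument.
\item Your version also shows exactly why the weight $\tfrac12$ on $P$ is right: ordered versus unordered pairs among the indices $\geq 2$.
\item The paper's computation is basis-free, and the cancellation shows neatly that $\operatorname{Tr}(XP)\operatorname{Tr}(XR)-\operatorname{Tr}(XPXR)$ does not depend on $\alpha$.
\item The cost is that the paper needs invertibility. This is harmless here, since both sides are polynomials agreeing on the open set $S^n_{++}$.
\end{itemize}
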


\begin{proof}
Since $g(A)=D_{P}\det(A)$,
\[
D_{R}g(A)=D_{R}D_{P}\det(A).
\]
Let $X=A^{-1}$. The second directional derivative formula for the determinant is
\[
D_{U}D_{V}\det(A)=\det A\left[
\operatorname{Tr}(XU)\operatorname{Tr}(XV)-\operatorname{Tr}(XUXV)
\right].
\]
Taking $U=P$ and $V=R$, we obtain
\[
D_{R}g(A)=\det A\left[
\operatorname{Tr}(XP)\operatorname{Tr}(XR)-\operatorname{Tr}(XPXR)
\right].
\]
Write
\[
s=\operatorname{Tr}X,\qquad c=\operatorname{Tr}(Xb)=\alpha^{T}X\alpha.
\]
Since $P=I-b$ and $R=\frac12(I+b)$,
\[
\operatorname{Tr}(XP)=s-c,\qquad
\operatorname{Tr}(XR)=\frac12(s+c).
\]
Therefore
\[
\operatorname{Tr}(XP)\operatorname{Tr}(XR)=\frac12(s-c)(s+c)
=\frac12(s^{2}-c^{2}).
\]
Moreover,
\[
\operatorname{Tr}(XPXR)=\frac12\operatorname{Tr}\bigl(X(I-b)X(I+b)\bigr).
\]
Expanding the matrix inside the trace gives
\[
X(I-b)X(I+b)=X^{2}+X^{2}b-XbX-XbXb.
\]
By cyclic invariance of the trace,
\[
\operatorname{Tr}(X^{2}b)=\operatorname{Tr}(XbX),
\]
so the middle two terms cancel after taking the trace. Since $b=\alpha\alpha^{T}$ is a rank-one projection,
\[
\operatorname{Tr}(XbXb)=(\alpha^{T}X\alpha)^{2}=c^{2}.
\]
Thus
\[
\operatorname{Tr}(XPXR)=\frac12\left(\operatorname{Tr}(X^{2})-c^{2}\right).
\]
It follows that
\[
\operatorname{Tr}(XP)\operatorname{Tr}(XR)-\operatorname{Tr}(XPXR)
=\frac12\left(s^{2}-\operatorname{Tr}(X^{2})\right)
=\sigma_{2}(X).
\]
Therefore
\[
D_{R}g(A)=\det A\,\sigma_{2}(A^{-1}).
\]
By Lemma \ref{lem:inverse-remove},
\[
\sigma_{2}(A^{-1})=\frac{\sigma_{n-2}(A)}{\det A},
\]
and hence $D_{R}g(A)=\sigma_{n-2}(A)$.
\end{proof}

Combining the previous lemmas,
\begin{equation}\label{eq:key-identification}
\Phi(A)=\frac{\operatorname{Tr}(P\,\operatorname{adj} A)}{\sigma_{n-2}(A)}
=\frac{g(A)}{D_{R}g(A)}.
\end{equation}
This identification is the key point of the proof.

\subsubsection{Proof of proposition\ref{thm:main}}

We have defined
\[
g(A)=D_{P}\det(A),\qquad P=I-\alpha\alpha^{T}.
\]
Since $P$ is semipositive definite, $P$ lies in the closure of $S^{n}_{++}$. Because $\det$ is hyperbolic on the positive definite cone. Approaching a semi positive definite matrix through a positive definite matrix, combined with the closure of real coefficient polynomials with respect to coefficients , Theorem \ref{thm:directional-hyperbolic} implies that
\[
g(A)=D_{P}\det(A)
\]
is also a hyperbolic polynomial. And that its hyperbolicity cone contains $S^{n}_{++}$ is obvious.

On the other hand,
\[
R=\frac12(I+\alpha\alpha^{T})>0,
\]
so $R\in S^{n}_{++}$. Since $S^{n}_{++}$ is contained in the hyperbolicity cone of $g$, we may apply Theorem \ref{thm:garding} to $g$ in the direction $R$. Hence
\[
A\mapsto \frac{g(A)}{D_{R}g(A)}
\]
is concave on $S^{n}_{++}$. By \eqref{eq:key-identification}, this function is exactly
\[
\Phi(A)=\frac{\operatorname{Tr}(PA^{-1})}{\sigma_{2}(A^{-1})}.
\]
Thus $\Phi$ is concave on $S^{n}_{++}$.

\subsection{The equal case of proposition\ref{thm:main}}
This subsection states the equality theory used below. 

\subsubsection{A strictly convex theorem}
We recall the parts of Bauschke--G{\"u}ler--Lewis--Sendov \cite{bauschke2001hyperbolic} that are used here. Let $X$ be a finite-dimensional real vector space, and let $p$ be a homogeneous polynomial of degree $m$ on $X$.
If $p$ is hyperbolic with respect to $d$, \cite{bauschke2001hyperbolic} define the characteristic-root map
\[
\lambda(x)=(\lambda_{1}(x),\ldots,\lambda_{m}(x))
\]
by
\[
p(x+td)=p(d)\prod_{i=1}^{m}(t+\lambda_{i}(x)).
\]
The corresponding hyperbolicity cone is denoted by $C(d)$.

\begin{definition}[\cite{bauschke2001hyperbolic} Definition 2.8: completeness]\label{def:complete}
A hyperbolic polynomial $p$ is called complete if
\[
\{x\in X:\lambda(x)=0\}=\{0\}.
\]
\end{definition}

\cite{bauschke2001hyperbolic} Fact 2.9 gives the equivalent descriptions
\[
\{x:\lambda(x)=0\}
=
\{x:x+C(d)=C(d)\}
=
\{x:p(tx+y)=p(y),\ \forall y\in X,\ \forall t\in\mathbb{R}\}.
\]
Therefore, for a hyperbolic polynomial $p$,
\[
p\ \text{is complete}
\quad\Longleftrightarrow\quad
\text{there is no nonzero }L\text{ such that }p(y+tL)=p(y)
\]
for all $y$ and $t$. Since
\[
D_{L}p\equiv0
\quad\Longleftrightarrow\quad
p(y+tL)=p(y),\qquad \forall y,\ t,
\]
we will use the following equivalent form:
\begin{equation}\label{eq:complete-equivalent}
p\ \text{is complete}
\quad\Longleftrightarrow\quad
\{L:D_{L}p\equiv0\}=\{0\}.
\end{equation}

\begin{theorem}[\cite{bauschke2001hyperbolic} Corollary 4.7, form used here]\label{thm:bgls}
Let $p$ be hyperbolic with respect to $d$, and assume $p(d)>0$. Let $C=C(d)$. For $c\in C$, define
\[
F(x)=-\log p(x),\qquad h_{c}(x)=-(\nabla F(x))(c).
\]
Then $F$ and $h_{c}$ are convex on $C$. Moreover, if $p$ is complete, then $F$ and $h_{c}$ are strictly convex on $C$.
\end{theorem}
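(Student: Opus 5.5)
We argue directly for hyperbolic polynomials in the real-rooted parametrization. Fix $x\in C$. Since $C$ is open, for any direction $v$ we have $x+sv\in C$ for small $|s|$. Convexity of $F$ and $h_c$ on the convex cone $C$ reduces to nonnegativity of second derivatives along lines. Strict convexity reduces to showing that these second derivatives vanish only when $D_v p\equiv 0$, which completeness excludes through \eqref{eq:complete-equivalent}.

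\textbf{Step 1 (convexity of $F$).} By hyperbolicity applied with respect to $x$ (the hyperbolicity cone does not depend on the base point inside it, a standard fact of G{\aa}rding theory), we can write
\[
p(x+sv)=p(x)\prod_{i=1}^m(1+s\mu_i),\qquad \mu_i\in\mathbb{R}.
\]
Hence
\[
F(x+sv)=F(x)-\sum_i\log(1+s\mu_i),\qquad
\frac{d^2}{ds^2}F(x+sv)\Big|_{s=0}=\sum_i\mu_i^2\ge0 .
\]
Equality forces every $\mu_i=0$, so $p(x+sv)=p(x)$ for all $s$.

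\textbf{Step 2 (convexity of $h_c$).} Note that $h_c(x)=D_cp(x)/p(x)$. Consider the two-variable restriction $q(s,t)=p(x+sv+tc)$. For each fixed $s$, $q$ is real-rooted in $t$, and
\[
h_c(x+sv)=\partial_t\log q(s,t)\big|_{t=0}=\sum_i\frac{1}{t_i(s)},
\]
where $-t_i(s)$ are the roots, with $t_i(s)>0$ because $x+sv\in C$ and $c\in C$. The cleanest route is the standard one: $h_c$ is the composition of $-\nabla F$ with a linear functional. Convexity follows from the fact that for hyperbolic $p$ the map $x\mapsto -\langle\nabla F(x),c\rangle$ has third derivative sign structure inherited from the univariate self-concordant barrier $-\log\prod(t+\lambda_i)$. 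Concretely, I would use the eigenvalue representation $\lambda(x)$ relative to $c$:
\[
h_c(x)=\sum_i \frac{1}{\lambda_i(x)} ,
\]
a symmetric convex function of the eigenvalues, here with $\lambda$ taken relative to $d=c$. I would then invoke the Bauschke–Güler–Lewis–Sendov principle that a symmetric convex function composed with $\lambda$ is convex, the hyperbolic analogue of Davis–Lewis convexity.

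\textbf{Step 3 (strictness).} If the second derivative of $h_c$ along $v$ vanishes at $x$, then strict convexity of $\sum 1/\lambda_i$ on $\mathbb{R}^m_{++}$, together with the equality case of the composition principle, forces $\lambda(x+sv)$ to be affine in $s$ with a common shift structure, which in turn yields $\lambda(v)=0$. Completeness then gives $v=0$. For $F$, Step 1 already yields $p(x+sv)\equiv p(x)$. Applying this at every point of $C$ and using polynomial identity (analyticity), we get $D_vp\equiv0$ on an open set and hence everywhere, so $v=0$ by \eqref{eq:complete-equivalent}.

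The main obstacle is Step 2 and its equality analysis. The convexity of spectral compositions for general hyperbolic polynomials is the deep part of \cite{bauschke2001hyperbolic}, relying on Gurvits/Lax-type majorization of $\lambda(x+y)$ by $\lambda(x)+\lambda(y)$. Since the paper quotes this as a known result, I would cite it rather than reprove it. I would check carefully only the strictness deduction from completeness.
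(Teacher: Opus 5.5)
The paper does not prove this statement; it only quotes \cite{bauschke2001hyperbolic}, Corollary 4.7. Your convexity arguments are sound. Step 1 is a correct self-contained proof that $F$ is convex. In Step 2, the identity $h_c=\sum_i 1/\lambda^c_i$ uses eigenvalues relative to $c$, which is legitimate because $C(c)=C(d)$ and $p(c)>0$. Combined with the BGLS theorem that $f\circ\lambda$ is convex for symmetric convex $f$, it gives convexity of $h_c$.

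The gap is in Step 3, which is exactly the strict convexity the paper needs for Corollary \ref{cor:Psi-strict}. For $F$, the vanishing of $\frac{d^2}{ds^2}F(x+sv)$ at the single point $x$ only gives $p(x+sv)\equiv p(x)$ on one line. ``Applying this at every point of $C$'' is a non sequitur: nothing says the Hessian degenerates in direction $v$ anywhere else. Analyticity along one line cannot produce $D_vp\equiv0$ on an open set.

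What is missing is the lineality argument (Fact 2.9 of \cite{bauschke2001hyperbolic}, with base point $x$ in place of $d$). All eigenvalues of $v$ relative to $x$ vanish, so $\pm v\in\overline{C}$. Hence $y+sv\in C$ for every $y\in C$ and every $s\in\mathbb{R}$. Then $s\mapsto p(y+sv)=p(y)\prod_i(1+s\mu_i^y)$ has no real zero, which forces all $\mu_i^y=0$, i.e.\ $p(y+sv)\equiv p(y)$ for \emph{every} $y\in C$. Only now does the polynomial identity give $D_vp\equiv0$, and then $v=0$ by \eqref{eq:complete-equivalent}.

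For $h_c$, the strictness argument fails as written. The composition principle is a two-point inequality, $f\bigl(\lambda(\tfrac{a+b}{2})\bigr)\le f\bigl(\tfrac{\lambda(a)+\lambda(b)}{2}\bigr)\le\tfrac12\bigl(f(\lambda(a))+f(\lambda(b))\bigr)$, resting on the majorization $\lambda(\tfrac{a+b}{2})\prec\tfrac{\lambda(a)+\lambda(b)}{2}$. A second derivative vanishing at one point cannot trigger its equality case.

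Moreover, the outcome you assert (``$\lambda(x+sv)$ affine in $s$ with a common shift'') is not what equality gives, and it would not yield $\lambda(v)=0$. For $v=\kappa c$ one has $\lambda^c(x+sv)=\lambda^c(x)+s\kappa\mathbf{1}$, while $\lambda^c(v)=\kappa\mathbf{1}\neq0$.

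Argue on segments instead. Suppose $h_c$ is affine on $[x,x+v]$ and apply the chain to all pairs $a,b$ in the segment. Strict convexity of $\mu\mapsto\sum_i 1/\mu_i$ on $\mathbb{R}^m_{++}$ forces $\lambda^c(a)=\lambda^c(b)$, so $\lambda^c(x+sv)$ is constant for $s\in[0,1]$. Hence $p(x+sv+tc)=p(x+tc)$ for $s\in[0,1]$ and all $t$, and so for all $s$ by polynomiality. At $t=0$ this reads $p(x+sv)\equiv p(x)$, so all eigenvalues of $v$ relative to $x$ vanish. The lineality argument above then gives $v=0$.
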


Since
\[
F(x)=-\log p(x)
\]
implies
\[
-(\nabla F(x))(c)=\frac{D_{c}p(x)}{p(x)},
\]
the direct consequence used here is:
\begin{equation}\label{eq:bgls-direct}
p\ \text{complete},\ c\in C(d)
\quad\Longrightarrow\quad
x\mapsto\frac{D_{c}p(x)}{p(x)}
\ \text{is strictly convex on }C(d).
\end{equation}

\subsubsection{The Block Formula}
Through orthogonal transformation, we can assume $$P=\begin{bmatrix}I_{m}&0\\ 0&0\end{bmatrix}$$
where $m=n-1$.\\
Define the Newton transform of $C$ by
\[
T_{k}(C)=\sigma_{k}(C)I-\sigma_{k-1}(C)C+\sigma_{k-2}(C)C^{2}-\cdots+(-1)^{k}C^{k}.
\]
Then
\begin{equation}\label{eq:g-block}
g(A)=\operatorname{Tr}(P\,\operatorname{adj}A)
=c\,\sigma_{m-1}(C)-u^{T}T_{m-2}(C)u.
\end{equation}
Indeed,
\[
g(A)=D_{P}\det(A)
=\left.\frac{d}{dt}
\det
\begin{pmatrix}
C+tI_{m} & u\\
u^{T} & c
\end{pmatrix}\right|_{t=0}.
\]
When $C+tI_{m}$ is invertible, the Schur complement formula gives
\[
\det
\begin{pmatrix}
C+tI_{m} & u\\
u^{T} & c
\end{pmatrix}
=\det(C+tI_{m})
\left(c-u^{T}(C+tI_{m})^{-1}u\right).
\]
Differentiating at $t=0$, the first part contributes
\[
c\,D_{I}\det(C)=c\,\operatorname{Tr}(\operatorname{adj}C)=c\,\sigma_{m-1}(C),
\]
and the second part gives
\[
-D_{I}\left(\det(C)u^{T}C^{-1}u\right)
=-u^{T}T_{m-2}(C)u.
\]
Since both sides are polynomial identities, no invertibility assumption on $C$ is needed.

\subsubsection{The General Form of a Degeneracy Direction}

Let
\[
L=
\begin{pmatrix}
M & v\\
v^{T} & \ell
\end{pmatrix}.
\]
Assume $L\in L_{g}$, that is,
\[
D_{L}g(A)\equiv0.
\]
First set $u=0$. Then
\[
g(A)=c\,\sigma_{m-1}(C).
\]
Thus
\[
D_{L}g(A)=\ell\,\sigma_{m-1}(C)+c\,D_{M}\sigma_{m-1}(C).
\]
This is identically zero for all $C$ and $c$, so
\[
\ell=0,\qquad D_{M}\sigma_{m-1}(C)\equiv0.
\]
Since
\[
D_{M}\sigma_{m-1}(C)=\operatorname{Tr}(T_{m-2}(C)M),
\]
we obtain
\[
\operatorname{Tr}(T_{m-2}(C)M)=0,\qquad C\in S^{m}.
\]

\subsubsection{The Case $n\geq4$}\label{sec equal}

If $n\geq4$, then $m=n-1\geq3$. Expand near $C=I+sN$. Since
\[
D_{M}\sigma_{m-1}(I+sN)\equiv0,
\]
setting $s=0$ gives
\[
D_{M}\sigma_{m-1}(I)=0.
\]
But
\[
D_{M}\sigma_{m-1}(I)=(m-1)\operatorname{Tr}M,
\]
so
\[
\operatorname{Tr}M=0.
\]
Taking the coefficient of $s$, the second derivative formula gives
\[
D_{N}D_{M}\sigma_{m-1}(I)
=(m-2)\left(\operatorname{Tr}N\,\operatorname{Tr}M-\operatorname{Tr}(NM)\right).
\]
Since $\operatorname{Tr}M=0$,
\[
-(m-2)\operatorname{Tr}(NM)=0,\qquad N\in S^{m}.
\]
Because $m-2>0$ and $N$ is arbitrary, we get $M=0$.

We already know $M=0$ and $\ell=0$. Returning to the full formula \eqref{eq:g-block}, the direction $v$ corresponds to the variation of $u$, and hence
\[
D_{L}g(A)=-2v^{T}T_{m-2}(C)u.
\]
This vanishes for all $C,u$. Taking $C=I$, we have $T_{m-2}(I)=(m-1)I$, so
\[
-2(m-1)v^{T}u=0,\qquad u\in\mathbb{R}^{m}.
\]
Hence $v=0$. Therefore $L=0$, i.e.
\[
n\geq4\quad\Longrightarrow\quad L_{g}=\{0\}.
\]
Hence by theorem\ref{thm:bgls} we get following corollary.
\begin{corollary}\label{cor:Psi-strict}
When $n\geq4$,
\[
\Psi(A)=\frac{D_{R}g(A)}{g(A)}
\]
is strictly convex on $S^{n}_{++}$.
\end{corollary}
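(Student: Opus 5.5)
The plan is to apply the consequence \eqref{eq:bgls-direct} of Theorem \ref{thm:bgls} to the polynomial $p=g=D_{P}\det$, with the direction $c=R$. In that statement $d$ is the direction of hyperbolicity, and we take $d=I$.

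\textbf{Step 1 (hyperbolicity).} We already know from the proof of Proposition \ref{thm:main} that $g$ is hyperbolic. Its hyperbolicity cone $C(d)$ contains $S^{n}_{++}$, and $g(I)=\operatorname{Tr}(P)=n-1>0$. So $g$ is hyperbolic with respect to $d=I$ with $g(d)>0$. Moreover $R=\frac12(I+\alpha\alpha^{T})\in S^{n}_{++}\subset C(I)$, so $R$ is an admissible choice of $c$.

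\textbf{Step 2 (completeness).} Theorem \ref{thm:bgls} requires $g$ to be complete. By \eqref{eq:complete-equivalent} this means $L_{g}=\{L: D_{L}g\equiv 0\}=\{0\}$, which is exactly what Subsection \ref{sec equal} established for $n\ge4$. That computation has three parts, all in the orthonormal frame where $P=\operatorname{diag}(I_{m},0)$:
\begin{itemize}
\item using the block formula \eqref{eq:g-block} and restricting to $u=0$ gives $\ell=0$ and $\operatorname{Tr}(T_{m-2}(C)M)\equiv0$;
\item expanding at $C=I+sN$ with $m-2>0$ forces $M=0$;
\item the remaining linear term in $u$ forces $v=0$.
\end{itemize}
Conjugating by an orthogonal matrix preserves both the linear space $L_{g}$ and hyperbolicity, so completeness holds in general.

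\textbf{Step 3 (conclusion).} By \eqref{eq:bgls-direct}, the map $A\mapsto D_{R}g(A)/g(A)=\Psi(A)$ is strictly convex on $C(I)$. Since $S^{n}_{++}$ is a convex subset of $C(I)$, strict convexity restricts to it, and this gives the corollary.

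The only delicate point is the identification of the cone. Theorem \ref{thm:bgls} is stated on the full hyperbolicity cone $C(d)$, so we must make sure two things hold. First, the cone built from $d=I$ contains $S^{n}_{++}$. This follows from Theorem \ref{thm:directional-hyperbolic} together with approximating $P$ by $P+\varepsilon I$: the roots of $t\mapsto g(A+tI)$ for $A>0$ are limits of negative roots, so they are nonpositive. They are in fact strictly negative, because $g(A)>0$ for $A>0$. Second, $g$ is positive on $S^{n}_{++}$, which holds since $g(A)=\operatorname{Tr}(P\operatorname{adj}A)>0$ when $\operatorname{adj}A>0$ and $P\neq0$. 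The restriction in the last step is then legitimate.
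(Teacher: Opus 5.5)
Your proposal is correct and follows the paper's route. The paper likewise shows $L_g=\{0\}$ for $n\ge4$ from the block formula, using exactly the three steps you list, and then invokes the Bauschke--G\"uler--Lewis--Sendov consequence \eqref{eq:bgls-direct} with $c=R$. Your extra checks that $g(I)=n-1>0$ and that $S^n_{++}\subset C(I)$ (by approximating $P$ with $P+\varepsilon I$) fill in details the paper dismisses as obvious.
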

\subsubsection{Deriving the Higher-Dimensional Equality Condition}

\begin{theorem}[Higher-dimensional equality condition]\label{thm:equality}
Let $n\geq4$ and $A,B\in S^{n}_{++}$. Then
\[
\Phi(A+B)=\Phi(A)+\Phi(B)
\]
if and only if there exists $\lambda>0$ such that
\[
B=\lambda A.
\]
\end{theorem}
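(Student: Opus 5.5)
The easy direction comes first. If $B=\lambda A$ with $\lambda>0$, then the homogeneity $\Phi(tA)=t\Phi(A)$ gives
\[
\Phi(A+B)=(1+\lambda)\Phi(A)=\Phi(A)+\Phi(B).
\]

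For the converse I would use the strict convexity in Corollary \ref{cor:Psi-strict}. By \eqref{eq:key-identification} we have $\Phi=1/\Psi$ on $S^{n}_{++}$. Here $\Psi>0$, since both $g(A)=\operatorname{Tr}(P\,\operatorname{adj}A)$ and $D_Rg(A)=\sigma_{n-2}(A)$ are positive on $S^n_{++}$ (as $P\ge0$, $P\neq0$ and $\operatorname{adj}A>0$). Also $\Psi$ is homogeneous of degree $-1$.

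Assume $\Phi(A+B)=\Phi(A)+\Phi(B)$, and set $a=\Phi(A)$, $b=\Phi(B)$. Normalize $\tilde A=A/a$ and $\tilde B=B/b$, so that $\Phi(\tilde A)=\Phi(\tilde B)=1$, i.e.\ $\Psi(\tilde A)=\Psi(\tilde B)=1$. With $t=b/(a+b)\in(0,1)$ we can write
\[
A+B=(a+b)\bigl((1-t)\tilde A+t\tilde B\bigr).
\]
Homogeneity and the assumed equality then give $\Phi\bigl((1-t)\tilde A+t\tilde B\bigr)=1$, that is,
\[
\Psi\bigl((1-t)\tilde A+t\tilde B\bigr)=1=(1-t)\Psi(\tilde A)+t\Psi(\tilde B).
\]
If $\tilde A\neq\tilde B$, strict convexity of $\Psi$ on the segment $[\tilde A,\tilde B]\subset S^n_{++}$ forces a strict inequality $<$, which is a contradiction. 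Hence $\tilde A=\tilde B$, so $B=(b/a)A$ and $\lambda=b/a>0$.

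The main point to check is that Corollary \ref{cor:Psi-strict} genuinely applies. This needs Theorem \ref{thm:bgls} with $p=g$ and $c=R$: $g$ must be hyperbolic with $S^n_{++}$ contained in its cone (shown in the proof of Proposition \ref{thm:main}), and $g$ must be complete. Completeness follows from Section \ref{sec equal}, where $\{L:D_Lg\equiv0\}=\{0\}$ was established for every symmetric $L$; this is exactly \eqref{eq:complete-equivalent}.

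One subtlety concerns the degeneracy argument there. It only treated block matrices in the basis adapted to $P$, and it separated the $u=0$ part from the part linear in $u$. I would verify this by viewing $D_Lg(A)$ as a polynomial in $(C,u,c)$ and comparing the coefficients that are of degree $0$ and of degree $1$ in $u$. The terms of degree $0$ in $u$ give $\ell=0$ and $M=0$. The coefficient of $u$, namely $-2v^{T}T_{m-2}(C)u$, gives $v=0$; this coefficient is exactly what remains once $M=0$ and $\ell=0$.

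Finally, the strict convexity in Theorem \ref{thm:bgls} is on the open cone $C(d)$. Since $S^n_{++}$ is a convex subset of it, restricting to the segment is legitimate and the argument closes.
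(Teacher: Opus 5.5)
Your proposal is correct and follows essentially the same route as the paper: normalize by $a=\Phi(A)$ and $b=\Phi(B)$, write $(A+B)/(a+b)$ as a convex combination of $A/a$ and $B/b$, and use the strict convexity of $\Psi=1/\Phi$ from Corollary \ref{cor:Psi-strict} to force $A/a=B/b$, with the converse following from $1$-homogeneity. Your additional checks are accurate details that the paper leaves implicit: positivity of $\Psi$ on $S^n_{++}$, completeness of $g$ obtained by sorting $D_Lg$ by degree in $u$, and the fact that $S^n_{++}$ sits inside the hyperbolicity cone.
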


\begin{proof}
Set
\[
a=\Phi(A)>0,\qquad b=\Phi(B)>0.
\]
Define the normalized matrices
\[
\widehat{A}=\frac{A}{a},\qquad \widehat{B}=\frac{B}{b}.
\]
Since $\Phi$ is homogeneous of degree one,
\[
\Phi(\widehat{A})=1,\qquad \Phi(\widehat{B})=1.
\]
Equivalently,
\[
\Psi(\widehat{A})=1,\qquad \Psi(\widehat{B})=1.
\]
Let
\[
\theta=\frac{a}{a+b}\in(0,1).
\]
Then
\[
\frac{A+B}{a+b}
=\theta\widehat{A}+(1-\theta)\widehat{B}.
\]
By convexity of $\Psi$,
\[
\Psi\left(\frac{A+B}{a+b}\right)
=
\Psi(\theta\widehat{A}+(1-\theta)\widehat{B})
\leq
\theta\Psi(\widehat{A})+(1-\theta)\Psi(\widehat{B})
=1.
\]
Therefore
\[
\Phi\left(\frac{A+B}{a+b}\right)\geq1.
\]
Using homogeneity again,
\[
\Phi(A+B)\geq a+b=\Phi(A)+\Phi(B).
\]

If $\widehat{A}\neq\widehat{B}$, then the strict convexity of $\Psi$ gives
\[
\Psi(\theta\widehat{A}+(1-\theta)\widehat{B})
<
\theta\Psi(\widehat{A})+(1-\theta)\Psi(\widehat{B})
=1.
\]
Thus
\[
\Phi\left(\frac{A+B}{a+b}\right)>1,
\]
and hence
\[
\Phi(A+B)>\Phi(A)+\Phi(B).
\]
Consequently equality can occur only if
\[
\widehat{A}=\widehat{B},
\]
that is,
\[
\frac{A}{\Phi(A)}=\frac{B}{\Phi(B)}.
\]
Therefore
\[
B=\frac{\Phi(B)}{\Phi(A)}A.
\]
Let
\[
\lambda=\frac{\Phi(B)}{\Phi(A)}>0.
\]
Then $B=\lambda A$.

Conversely, if $B=\lambda A$ with $\lambda>0$, then
\[
\Phi(A+B)=\Phi((1+\lambda)A)
=(1+\lambda)\Phi(A)
=\Phi(A)+\Phi(\lambda A)
=\Phi(A)+\Phi(B).
\]
\end{proof}
\subsubsection{The Case $n=3$}
For the sake of completeness, we will explain why $n=3$ is a special case.
\\Here $m=n-1=2$, so
\[
\sigma_{m-1}(C)=\sigma_{1}(C)=\operatorname{Tr}C.
\]
Thus
\[
D_{M}\sigma_{1}(C)=\operatorname{Tr}M.
\]
The previous necessary conditions give only
\[
\ell=0,\qquad \operatorname{Tr}M=0.
\]
For the $v$ part, since $T_{m-2}=T_{0}=I$,
\[
D_{L}g(A)=-2v^{T}u.
\]
This vanishes for all $u$, so $v=0$. Consequently
\[
L_{g}=
\left\{
\begin{pmatrix}
M & 0\\
0 & 0
\end{pmatrix}
: M=M^{T},\ \operatorname{Tr}M=0
\right\}.
\]
In coordinate-free form,
\[
L_{g}=\{L=L^{T}:L\alpha=0,\ \operatorname{Tr}L=0\}.
\]
Hence $g$ is not complete.
\subsection{A convex theorem}

\begin{lemma}\label{lem:inverse-convex}
If $f(x)$ is a positive concave function in $\mathbb{R}^{n}$, then $f^{-1}$ is convex.
\end{lemma}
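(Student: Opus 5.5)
The plan is to check convexity directly from the definition, using only two facts: concavity of $f$, and the fact that $t\mapsto 1/t$ is convex and decreasing on $(0,\infty)$. (Here $f^{-1}$ means the reciprocal $1/f$, not an inverse function.) In the setting where the lemma will be applied, $f$ is defined on a convex domain such as $S^{n}_{++}$ rather than all of $\mathbb{R}^{n}$; the argument is identical on any convex set, so I will write it for a convex domain $D$.

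Fix $x,y\in D$ and $\theta\in[0,1]$. Concavity gives
\[
f(\theta x+(1-\theta)y)\geq \theta f(x)+(1-\theta)f(y)>0.
\]
Since $t\mapsto 1/t$ is decreasing on $(0,\infty)$, taking reciprocals reverses this inequality:
\[
\frac{1}{f(\theta x+(1-\theta)y)}\leq \frac{1}{\theta f(x)+(1-\theta)f(y)}.
\]
Since $t\mapsto 1/t$ is convex on $(0,\infty)$, the right-hand side satisfies
\[
\frac{1}{\theta f(x)+(1-\theta)f(y)}\leq \frac{\theta}{f(x)}+\frac{1-\theta}{f(y)}.
\]
Chaining these two inequalities gives exactly the convexity of $1/f$.

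There is no real obstacle. The only point requiring care is that positivity of $f$ is needed at each step, so that both the monotonicity and the convexity of $1/t$ can be applied on $(0,\infty)$.

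If one also wants strictness, which is relevant for the equality discussion, the same chain gives it. If $f$ is strictly concave, the first inequality is strict. If instead $f(x)\neq f(y)$ and $\theta\in(0,1)$, the second inequality is strict because $1/t$ is strictly convex. Thus $1/f$ is strictly convex whenever $f$ is strictly concave.
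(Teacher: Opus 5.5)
Your proof is correct and is essentially the paper's own argument. Concavity of $f$ together with the monotonicity of $t\mapsto 1/t$ on $(0,\infty)$ gives the first inequality, and convexity of $t\mapsto 1/t$ (Jensen) gives the second; your strictness remark is also consistent with Remark~\ref{rem:inverse-equality}.
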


\begin{proof}
The condition of $f$ means
\[
f((1-t)x+ty)\geq(1-t)f(x)+tf(y),\qquad x,y\in\mathbb{R}^{n},\ t\in[0,1].
\]
Thus we have
\[
f((1-t)x+ty)^{-1}
\leq [(1-t)f(x)+tf(y)]^{-1}
\leq (1-t)f(x)^{-1}+tf(y)^{-1},
\]
where the last inequality comes from Jensen's inequality above. This says that $f^{-1}$ is convex.
\end{proof}

\begin{remark}\label{rem:inverse-equality}
If $f^{-1}$ is not strictly convex, then the two equalities in the proof of Lemma \ref{lem:inverse-convex} must hold at the same time. In particular $f((1-t)x+ty)=(1-t)f(x)+tf(y)$.
\end{remark}

\begin{proposition}\label{prop:matrix-convex}
Let $\alpha\in\mathbb{R}^{n}$ with $|\alpha|=1$, $P=I_{n}-\alpha\alpha^{T}$.Then the function
\[
f(A):=\frac{1}{\operatorname{Tr}(PA^{-1})}
\]
is concave in $A\in S^{n}_{++}$.
\end{proposition}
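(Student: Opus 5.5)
We prove concavity of $f(A)=1/\operatorname{Tr}(PA^{-1})$ on $S^{n}_{++}$ by the same hyperbolic-polynomial mechanism used for Proposition \ref{thm:main}, and then give a direct variational check as a backup. The first step is to remove the inverse. Exactly as in Lemma \ref{lem:numerator},
\[
\operatorname{Tr}(PA^{-1})=\frac{\operatorname{Tr}(P\,\operatorname{adj}A)}{\det A}=\frac{D_{P}\det(A)}{\det A},
\]
so that
\[
f(A)=\frac{\det A}{D_{P}\det(A)}.
\]

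This is not directly of the form in Theorem \ref{thm:garding}, because $P$ is only positive semidefinite rather than in the open cone. The plan is to approximate. For $\varepsilon>0$ put $P_\varepsilon=P+\varepsilon I\in S^{n}_{++}$. By Theorem \ref{thm:garding} applied to the hyperbolic polynomial $p=\det$ with direction $e=P_\varepsilon$, the function
\[
f_\varepsilon(A)=\frac{\det A}{D_{P_\varepsilon}\det(A)}=\frac{1}{\operatorname{Tr}(P_\varepsilon A^{-1})}
\]
is concave on $S^{n}_{++}$.

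Next, for fixed $A\in S^{n}_{++}$ we have
\[
\operatorname{Tr}(P_\varepsilon A^{-1})=\operatorname{Tr}(PA^{-1})+\varepsilon\operatorname{Tr}(A^{-1})\to\operatorname{Tr}(PA^{-1}).
\]
The limit $\operatorname{Tr}(PA^{-1})$ is strictly positive: $P$ has rank $n-1\ge1$, so the trace equals the sum of the diagonal entries of the positive definite matrix $A^{-1}$ over an orthonormal basis of $\alpha^\perp$. Hence $f_\varepsilon\to f$ pointwise on $S^{n}_{++}$. Since a pointwise limit of concave functions is concave, $f$ is concave. This limiting step is the only delicate point, and it is handled by the positivity just noted.

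As an independent check, one can verify concavity directly. The map $A\mapsto A^{-1}$ is operator convex, and $X\mapsto \operatorname{Tr}(PX)$ is monotone and linear because $P\ge0$. So $h(A)=\operatorname{Tr}(PA^{-1})$ is convex, but this alone does not give concavity of $1/h$. Instead, compute along a line $A+tH$ with $X=A^{-1}$:
\[
h'=-\operatorname{Tr}(PXHX),\qquad h''=2\operatorname{Tr}(PXHXHX).
\]
Then $(1/h)''\le0$ is equivalent to $2(h')^2\le h\,h''$, that is,
\[
\operatorname{Tr}(PXHX)^2\le \operatorname{Tr}(PX)\operatorname{Tr}(PXHXHX).
\]
This follows from Cauchy--Schwarz for the semi-inner product $\langle Y,Z\rangle=\operatorname{Tr}(P^{1/2}Y X Z^{T}P^{1/2})$, taking $Y=I$ and $Z=XH$. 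Either route proves the proposition. I would present the first route, since it reuses Theorem \ref{thm:garding}.
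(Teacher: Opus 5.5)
Your proof is correct. It necessarily differs from the paper, which gives no argument for this proposition beyond a pointer to the appendix of \cite{alvarez1997convexity}.

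Your first route makes the statement the special case $p=\det$ of Theorem \ref{thm:garding}, with the degenerate direction $P$ handled by $P_\varepsilon=P+\varepsilon I\in S^{n}_{++}$. The one point that must be verified is $\operatorname{Tr}(PA^{-1})>0$, so that $f_\varepsilon\to f$ pointwise with a finite limit, and you verify it (it needs $n\ge 2$, which the paper also tacitly assumes). This is economical inside the paper, since it reuses exactly the machinery behind Proposition \ref{thm:main}, though it derives an elementary fact from a deep theorem.

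Your second route is self-contained and also right. The condition $(1/h)''\le 0$ is equivalent to $2(h')^{2}\le h\,h''$. Your form $\langle Y,Z\rangle$ is the Frobenius pairing of $P^{1/2}YX^{1/2}$ with $P^{1/2}ZX^{1/2}$, hence a genuine semi-inner product. So Cauchy--Schwarz with $Y=I$, $Z=XH$ gives exactly the needed inequality.

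This route also gives more than the citation does. The equality case of Cauchy--Schwarz shows that the second variation of $f$ at $A$ vanishes precisely for $H=cA+\mu\,(A\alpha)(A\alpha)^{T}$. Indeed, by Sherman--Morrison, $f$ is constant along $A+s\,(A\alpha)(A\alpha)^{T}$, because $P\alpha=0$. So $f$ is far from strictly concave, and any rigidity in the equality case of Theorem \ref{thm:bm} has to come from Proposition \ref{thm:main}, not from this proposition.
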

\begin{remark}
The fact that $1/\operatorname{Tr}(PA^{-1})$ is concave in $A\in S^{n}_{++}$ follows from the appendix in \cite{alvarez1997convexity}.
\end{remark}
Combining proposition\ref{thm:main} lemma\ref{lem:inverse-convex} and propsition\ref{prop:matrix-convex}, we can obtain following main theorems.
\begin{theorem}\label{thm1}
    Let $\alpha\in\mathbb{R}^{n}$ with $|\alpha|=1$, $P=I_{n}-\alpha\alpha^{T}$.Then for any $\lambda>0$ the function
\[
f(A):=\frac{ \sigma_{2}(A^{-1})}{\operatorname{Tr}(PA^{-1})}-\frac{\lambda}{\operatorname{Tr}(PA^{-1})}
\]
is convex in $A\in S^{n}_{++}$.
\end{theorem}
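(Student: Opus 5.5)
Split $f$ into two pieces and treat each with an earlier result:
\[
f(A)=\frac{\sigma_{2}(A^{-1})}{\operatorname{Tr}(PA^{-1})}+\lambda\left(-\frac{1}{\operatorname{Tr}(PA^{-1})}\right).
\]
Since a sum of convex functions is convex, it is enough to show that each piece is convex on $S^{n}_{++}$.

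\emph{First term.} By Proposition \ref{thm:main}, the function
\[
\Phi(A)=\frac{\operatorname{Tr}(PA^{-1})}{\sigma_{2}(A^{-1})}
\]
is concave on $S^{n}_{++}$. It is also positive there. Indeed, $\sigma_{2}(A^{-1})>0$ because all eigenvalues of $A^{-1}$ are positive. Moreover $\operatorname{Tr}(PA^{-1})>0$: choosing an orthonormal basis $e_1,\dots,e_{n-1}$ of $\alpha^{\perp}$, we get $\operatorname{Tr}(PA^{-1})=\sum_{i=1}^{n-1}e_i^{T}A^{-1}e_i>0$, since $n\geq2$. Lemma \ref{lem:inverse-convex} is stated on $\mathbb{R}^{n}$, but its proof uses only the concavity inequality along segments, so it applies verbatim on the convex set $S^{n}_{++}$. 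It gives that $\Phi^{-1}=\sigma_{2}(A^{-1})/\operatorname{Tr}(PA^{-1})$ is convex.

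\emph{Second term.} By Proposition \ref{prop:matrix-convex}, $1/\operatorname{Tr}(PA^{-1})$ is concave on $S^{n}_{++}$. Hence its negative is convex, and multiplying by $\lambda>0$ keeps it convex.

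Adding the two convex pieces gives convexity of $f$. The only point that needs care is the sign of the second term: subtracting a multiple of a concave function yields a convex function, so it works in our favour. The substantive inputs, Proposition \ref{thm:main} and Proposition \ref{prop:matrix-convex}, are already available. The main thing to verify is that positivity holds on all of $S^{n}_{++}$, so that Lemma \ref{lem:inverse-convex} applies; this was checked above.
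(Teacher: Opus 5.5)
Your proposal is correct and is exactly the paper's argument. The paper obtains Theorem~\ref{thm1} by combining three ingredients: the concavity of $\Phi$ behind Proposition~\ref{thm:main} (this also follows from the stated superadditivity together with degree-one homogeneity), Lemma~\ref{lem:inverse-convex} for the reciprocal $\sigma_2(A^{-1})/\operatorname{Tr}(PA^{-1})$, and Proposition~\ref{prop:matrix-convex} for the term $-\lambda/\operatorname{Tr}(PA^{-1})$. Your explicit checks that $\Phi>0$ on $S^{n}_{++}$ and that Lemma~\ref{lem:inverse-convex} holds on any convex domain fill in details the paper leaves implicit.
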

\section{Constant Rank Theorem}
\subsection{General Constant Rank Theorem}
For convenience, we state the version of the Bian--Guan structural constant rank theorem used here. It is a simplified form of Theorem 1.2 in \cite{bian2010structural}, adapted to the present setting, which refines the original microscopic convexity principle in \cite{bian2009microscopic}:

\begin{theorem}[Bian--Guan level-set form, application version]\label{thm:bg-level}
Let $\Omega\subset\mathbb{R}^{n}$ be a connected domain, and let $u\in C^{3,1}(\Omega)$ be a convex solution of
\[
F(D^{2}u,Du,u,x)=0,
\]
where
\[
F=F(r,p,z,x)\in C^{2,1}(S^{n}\times\mathbb{R}^{n}\times\mathbb{R}\times\Omega).
\]
Assume the following:
\begin{enumerate}
    \item $F$ is elliptic along the solution, namely
    \[
    \bigl(F^{ij}(D^{2}u,Du,u,x)\bigr)>0,
    \qquad
    F^{ij}:=\frac{\partial F}{\partial r_{ij}}.
    \]
    \item Along the solution, the nondegeneracy condition holds:
    \[
    F(0,Du(x),u(x),x)\neq0.
    \]
    \item For each fixed relevant gradient $p$, the sublevel set
    \[
    \Gamma_{F}(p)
    :=
    \left\{
    (A,z,x)\in S^{n}_{++}\times\mathbb{R}\times\Omega:
    F(A^{-1},p,z,x)\leq0
    \right\}
    \]
    is locally convex near the relevant points.
\end{enumerate}
Then
\[
\operatorname{rank}D^{2}u(x)
\]
is constant in $\Omega$.
\end{theorem}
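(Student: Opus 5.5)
This statement is a specialization of Theorem 1.2 in \cite{bian2010structural}, so the plan is to check that our three hypotheses imply the structural conditions there. If one wants a self-contained argument, the plan is to rerun the deformation/strong-maximum-principle scheme of \cite{bian2009microscopic,bian2010structural}; the outline follows.

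\emph{Setup.} Let $l=\min_{\Omega}\operatorname{rank}D^{2}u$, attained at some $x_{0}$. If $l=n$ there is nothing to prove. Hypothesis 2 gives $D^{2}u\not\equiv0$ along the solution, since otherwise $F(0,Du,u,x)=0$; hence $l\geq1$. Assume $l\le n-1$, and define near $x_{0}$
\[
\phi(x)=\sigma_{l+1}(D^{2}u)+\frac{\sigma_{l+2}(D^{2}u)}{\sigma_{l+1}(D^{2}u)},
\]
with the usual convention $\phi=\sigma_{l+1}$ when $\sigma_{l+1}=0$. This function is $C^{1,1}$, nonnegative, and vanishes exactly where $\operatorname{rank}D^{2}u=l$. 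The aim is the differential inequality
\[
F^{ij}\phi_{ij}\leq C\bigl(\phi+|D\phi|\bigr)\qquad\text{near }x_{0},
\]
with $C$ depending only on $\|u\|_{C^{3,1}}$, $\|F\|_{C^{2,1}}$ and the ellipticity bound from hypothesis 1. Given this, the strong maximum principle (using hypothesis 1) shows that $\{\phi=0\}$ is open. It is closed by continuity, so connectedness of $\Omega$ gives $\phi\equiv0$, i.e. the rank is constant.

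\emph{Derivation of the inequality.} At a point near $x_{0}$, rotate coordinates so that $D^{2}u$ is diagonal, with $l$ "good" eigenvalues bounded below and $n-l$ "bad" eigenvalues $u_{ii}$, $i\in B$, that are small. The standard computation reduces $F^{ij}\phi_{ij}$, modulo $O(\phi+|D\phi|)$, to
\[
\sum_{i\in B}\Bigl[F^{\alpha\beta}u_{ii\alpha\beta}-2\sum_{j\in G}\frac{F^{\alpha\beta}u_{ij\alpha}u_{ij\beta}}{u_{jj}}\Bigr]
\]
plus quadratic terms in the bad third derivatives. Differentiating the equation twice in the direction $e_{i}$, $i\in B$, lets us replace $F^{\alpha\beta}u_{ii\alpha\beta}$ by
\[
-F^{\alpha\beta,\gamma\delta}u_{\alpha\beta i}u_{\gamma\delta i}
\]
together with the mixed $p,z,x$ derivatives of $F$. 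Here $u_{i}$ is small: $u_{ii}\approx0$ and $u_{ij}\approx0$ forces the relevant gradient contributions into $O(\phi+|D\phi|)$.

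\emph{Main obstacle.} The remaining task is to show that the resulting quadratic form in the variables $(u_{\alpha\beta i},\,u_{i},\,e_{i})$ is nonpositive. The term $-2u_{ij\alpha}u_{ij\beta}/u_{jj}$ is exactly the second-order correction from passing $r\mapsto r^{-1}$ on the good block. So the form is the second variation of $(A,z,x)\mapsto F(A^{-1},p,z,x)$ at $A=(D^{2}u)^{-1}$, restricted to the good block and extended by a limiting procedure. Hypothesis 3, local convexity of the sublevel set $\Gamma_{F}(p)$, says precisely that this second variation is nonnegative on the tangent space of $\{F=0\}$. With the sign convention from ellipticity, this is the required nonpositivity.

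The delicate points are twofold. First, the bad block is degenerate, so one must approximate by $D^{2}u+\varepsilon I$ and pass to the limit. Second, the $p$-variable is frozen in $\Gamma_{F}(p)$; this is handled because its variation enters only through $u_{ij}$ with $i\in B$, which are controlled by $\phi$.

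I expect this identification of the quadratic form with the convexity condition, including the careful bookkeeping of the mixed $(z,x)$ terms, to be the main work. In practice I would cite \cite[Theorem 1.2 and Proposition 2.1]{bian2010structural} for it.
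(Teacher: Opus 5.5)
Your proposal matches the paper. The paper gives no argument of its own and simply quotes this statement as an adapted form of Theorem 1.2 of \cite{bian2010structural}, the same result you plan to cite, and your outline (the test function $\phi=\sigma_{l+1}+\sigma_{l+2}/\sigma_{l+1}$, reduction of $F^{ij}\phi_{ij}$ to a quadratic form signed by the sublevel-set convexity, then the strong maximum principle) is the standard argument behind that reference. Two small slips in the sketch: first, for bad indices $i$ it is $\partial_i Du=u_{ii}e_i$, not $u_i$, that is small, which is why $p$ can be frozen while the $z$- and $x$-variations stay in the form, as you in fact keep them; second, hypothesis 3 only concerns the zero sublevel set, so the degenerate limit should be taken through points of $\{F=0\}$ rather than through $D^2u+\varepsilon I$, which leaves that level set.
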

\subsection{A Constant Rank Theorem}

In this subsection we establish a constant rank theorem for the convex solution of the related nonlinear elliptic equation.

In what follows, $S^{n}$ denotes the set of the symmetric $n\times n$ matrices, and $S^{n}_{+}$ ($S^{n}_{++}$) is the subset of the semipositive (positive) definite matrices.

Let $K\subset\mathbb{R}^{n}$ be any bounded domain. Note that if let $v=-\log(-u)$, then Eq. \eqref{eq:s2-eigen} is equivalent to
\begin{equation}\label{eq:v-equation}
\begin{cases}
\sigma_{2}(D^{2}v)-\operatorname{Tr}(P(Dv)D^{2}v)=\lambda(K) & \text{in }K,\\
v(x)\to+\infty & x\to\partial K,
\end{cases}
\end{equation}
where $\operatorname{Tr}(A)$ denotes the trace of matrix $A$, and $P(\nabla v)=(P_{ij})$ is a matrix with
\begin{equation}\label{eq:Pij}
P_{ij}=|\nabla v|^{2}\delta_{ij}-v_{i}v_{j},\quad i,j=1,2,...,n.
\end{equation}
It follows that $P$ is semipositive definite, and $\operatorname{Tr}(PD^{2}v)=\sum_{i,j=1}^{n}P_{ij}v_{ij}$. By a simple observation, if $u$ is an admissible solution for Eq. \eqref{eq:s2-eigen}, then $v$ is an admissible solution for Eq. \eqref{eq:v-equation}.

One of main ingredients of the proof of Theorem \ref{thm:strict-convex} is a constant rank theorem, which states as follows (see e.g. \cite{caffarelli1985convexity} and \cite{korevaar1987convex}). Some related results have been obtained in \cite{ma2008convexity}.

\begin{theorem}\label{thm:revised}
Let $K\subset\mathbb{R}^{n}$ be a connected domain. Let
\[
v\in C^{4}(K),\qquad D^{2}v(x)\geq0,
\]
and assume that $v$ satisfies
\begin{equation}\label{eq:theorem-equation}
\sigma_{2}(D^{2}v)-\operatorname{Tr}(P(Dv)D^{2}v)=\lambda(K)\quad\text{in }K,
\end{equation}
where
\[
P(Dv)=|Dv|^{2}I-Dv\otimes Dv.
\]
Assume further that $v$ is admissible, namely
\begin{equation}\label{eq:admissible}
W(x):=D^{2}v(x)-Dv(x)\otimes Dv(x)\in\Gamma_{2}
\quad\text{for all }x\in K.
\end{equation}
Then
\[
\operatorname{rank}D^{2}v(x)
\]
is constant in $K$.
\end{theorem}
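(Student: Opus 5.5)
The plan is to apply the Bian--Guan level-set constant rank theorem (Theorem \ref{thm:bg-level}) to the equation
\[
F(r,p,z,x)=\sigma_{2}(r)-\operatorname{Tr}(P(p)r)-\lambda(K)=0,\qquad P(p)=|p|^{2}I-p\otimes p .
\]
This requires checking its three hypotheses along the solution $v$. The main substance is the convexity hypothesis 3, and there I intend to use Theorem \ref{thm1}.

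\emph{Ellipticity.} Here $F^{ij}=\sigma_{2}^{ij}(D^{2}v)-P_{ij}(Dv)$ and $\sigma_2^{ij}(r)=\sigma_1(r)\delta_{ij}-r_{ij}$. Writing $W=D^{2}v-Dv\otimes Dv$ and $p=Dv$, we get
\[
\sigma_1(D^2v)I-D^2v-|p|^2I+p\otimes p=\sigma_1(W)I-W=\sigma_2^{ij}(W).
\]
This matrix is positive definite because $W\in\Gamma_2$ by \eqref{eq:admissible}. So hypothesis 1 holds.

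\emph{Nondegeneracy.} We have $F(0,p,z,x)=-\lambda(K)\neq0$, so hypothesis 2 is immediate.

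\emph{Convexity of the sublevel set.} This is the key step. For fixed $p\neq0$, write $P(p)=|p|^{2}(I-\alpha\alpha^{T})$ with $\alpha=p/|p|$. The condition $F(A^{-1},p,z,x)\le0$ reads
\[
\sigma_{2}(A^{-1})-|p|^{2}\operatorname{Tr}(P_\alpha A^{-1})\le\lambda(K),\qquad P_\alpha=I-\alpha\alpha^T .
\]
Near relevant points, where $W\in\Gamma_2$ and the ellipticity above holds, $\operatorname{Tr}(P_\alpha A^{-1})>0$. Dividing by it, the condition becomes
\[
\frac{\sigma_{2}(A^{-1})}{\operatorname{Tr}(P_\alpha A^{-1})}-\frac{\lambda(K)}{\operatorname{Tr}(P_\alpha A^{-1})}\le |p|^{2}.
\]
By Theorem \ref{thm1} the left side is a convex function of $A\in S^n_{++}$. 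Its sublevel sets are therefore convex, and since $F$ does not depend on $(z,x)$, the set $\Gamma_F(p)$ is a product of a convex set with the $(z,x)$ factor, hence locally convex.

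The case $p=0$ needs separate treatment. There the set is $\{\sigma_2(A^{-1})\le\lambda\}$. Its convexity follows from Proposition \ref{thm:main} together with Lemma \ref{lem:inverse-convex}: $\sigma_2(A^{-1})^{-1/2}$ is concave, or directly $\sigma_{n-2}(A)/\det A$ has convex sublevel sets. Alternatively, I would use that the critical set $\{Dv=0\}$ can be handled by continuity.

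The regularity $v\in C^4$ suffices for the structural theorem in the form stated, and connectedness of $K$ is assumed. The conclusion is then that $\operatorname{rank}D^2v$ is constant in $K$.

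The expected obstacle is making the reduction to Theorem \ref{thm1} legitimate. Two points need care. First, the sign of $\operatorname{Tr}(P_\alpha A^{-1})$ must be controlled in the relevant region, and the division must preserve the set rather than swap it with its complement. Second, the points where $Dv=0$, where the $p$-dependence degenerates, need the separate argument or an approximation argument as indicated above.
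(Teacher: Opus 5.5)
Your proof is correct and follows essentially the same route as the paper. You use the same $F$, the same ellipticity identity $F^{ij}=\sigma_1(W)\delta_{ij}-W_{ij}$, the same nondegeneracy $F(0,p)=-\lambda(K)\neq0$, and the same reduction of the $p\neq0$ sublevel set to Theorem~\ref{thm1}.

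Two remarks.

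\emph{The sign of the denominator.} This is not really an obstacle. For every $A\in S^n_{++}$ one has $\operatorname{Tr}(P_\alpha A^{-1})=\operatorname{Tr}(A^{-1/2}P_\alpha A^{-1/2})>0$, since $P_\alpha\geq0$ has rank $n-1$. So the division is legitimate on the whole cone, with no localization to ``relevant points'' and no use of $W\in\Gamma_2$. Global positivity is exactly what makes the set coincide with a sublevel set of the convex function from Theorem~\ref{thm1}.

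\emph{The case $p=0$.} Your justification there is misattributed. Proposition~\ref{thm:main} concerns $\operatorname{Tr}(PA^{-1})/\sigma_2(A^{-1})$, and combined with Lemma~\ref{lem:inverse-convex} it does not yield concavity of $\sigma_2(A^{-1})^{-1/2}$. That concavity is true, but it is a separate classical fact: the G{\aa}rding-type concavity of $(\sigma_n(A)/\sigma_{n-2}(A))^{1/2}$. Via Lemma~\ref{lem:inverse-convex} it gives the convexity of $\sigma_2(A^{-1})^{1/2}$ recorded in Proposition~\ref{prop:s2-convex}, and either fact gives convexity of $\{\sigma_2(A^{-1})\leq\lambda\}$.

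The paper handles $p=0$ without this extra input. It fixes a unit vector $\alpha$ and writes $\Gamma_0=\bigcap_{\varepsilon>0}\Gamma_{\varepsilon\alpha}$, so the $p=0$ case follows from the $p\neq0$ case as an intersection of convex sets. Your route works once the correct reference is supplied; the paper's route is self-contained given Theorem~\ref{thm1}. Your vague ``continuity'' alternative is not needed.
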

\subsection{Proof of Theorem \ref{thm:revised}}

\subsubsection{Writing the Equation in a Form Suitable for the Constant Rank Theorem}

Define
\begin{equation}\label{eq:F-def}
F(r,p):=\sigma_{2}(r)-\operatorname{Tr}(P(p)r)-\lambda(K),
\qquad r\in S^{n},\quad p\in\mathbb{R}^{n},
\end{equation}
where
\[
P(p)=|p|^{2}I-p\otimes p.
\]
Then \eqref{eq:theorem-equation} is equivalent to
\[
F(D^{2}v,Dv)=0.
\]
The function $F$ is independent of $z$ and $x$, so these variables will not produce extra terms when Theorem \ref{thm:bg-level} is applied.

We also need the elementary identity
\begin{equation}\label{eq:s2-identity}
\sigma_{2}(r-p\otimes p)=\sigma_{2}(r)-\operatorname{Tr}(P(p)r).
\end{equation}
Indeed, from
\[
\sigma_{2}(M)=\frac12\left((\operatorname{Tr}M)^{2}-\operatorname{Tr}(M^{2})\right),
\]
and since $p\otimes p$ has rank one and $S_{2}(p\otimes p)=0$, we get
\[
\sigma_{2}(r-p\otimes p)
=\sigma_{2}(r)-|p|^{2}\operatorname{Tr}r+p^{T}rp.
\]
On the other hand,
\[
\operatorname{Tr}(P(p)r)
=\operatorname{Tr}\bigl((|p|^{2}I-p\otimes p)r\bigr)
=|p|^{2}\operatorname{Tr}r-p^{T}rp.
\]
Therefore \eqref{eq:s2-identity} holds. Consequently,
\begin{equation}\label{eq:F-alternative}
F(r,p)=\sigma_{2}(r-p\otimes p)-\lambda(K).
\end{equation}

\subsubsection{Verification of Ellipticity}

Differentiate with respect to $r$. Let
\[
T_{1}(N):=\sigma_{1}(N)I-N=(\operatorname{Tr}N)I-N
\]
be the first Newton tensor. Since
\[
\frac{\partial \sigma_{2}}{\partial r_{ij}}(r)=T_{1}(r)_{ij},
\]
\eqref{eq:F-def} gives
\begin{equation}\label{eq:Fij}
F^{ij}(r,p)=T_{1}(r)_{ij}-P(p)_{ij}.
\end{equation}
Using
\[
T_{1}(r-p\otimes p)=T_{1}(r)-P(p),
\]
we obtain, along the solution,
\begin{equation}\label{eq:Fij-solution}
F^{ij}(D^{2}v,Dv)
=T_{1}(D^{2}v-Dv\otimes Dv)_{ij}
=T_{1}(W)_{ij}.
\end{equation}
By the admissibility assumption \eqref{eq:admissible}, $W\in\Gamma_{2}$. If the eigenvalues of $W$ are $\mu=(\mu_{1},\ldots,\mu_{n})\in\Gamma_{2}$, then the eigenvalues of $T_{1}(W)$ are
\[
\frac{\partial\sigma_{2}}{\partial\mu_{i}}(\mu)
=\sum_{j\neq i}\mu_{j},
\qquad i=1,\ldots,n.
\]
These quantities are strictly positive in $\Gamma_{2}$. Hence
\begin{equation}\label{eq:elliptic}
\bigl(F^{ij}(D^{2}v,Dv)\bigr)=T_{1}(W)>0.
\end{equation}
This is precisely the ellipticity condition required by the Bian--Guan constant rank theorem.

\subsubsection{The Nondegeneracy Condition}

For every $p\in\mathbb{R}^{n}$,
\[
F(0,p)=\sigma_{2}(0)-\operatorname{Tr}(P(p)0)-\lambda(K)=-\lambda(K).
\]
Since the Hessian eigenvalue satisfies $\lambda(K)>0$,
\begin{equation}\label{eq:nondegenerate}
F(0,p)=-\lambda(K)\neq0.
\end{equation}
Thus the nondegeneracy condition in Theorem \ref{thm:bg-level} is satisfied.

\subsubsection{Convexity of the Sublevel Set: The Case $p\neq0$}

Fix $p\neq0$. Set
\[
\alpha=\frac{p}{|p|},
\qquad
Q_{\alpha}=I-\alpha\otimes\alpha.
\]
Then
\[
P(p)=|p|^{2}Q_{\alpha}.
\]
Consider the sublevel set
\begin{equation}\label{eq:Gamma-p}
\Gamma_{p}:=\{A\in S^{n}_{++}:F(A^{-1},p)\leq0\}.
\end{equation}
By \eqref{eq:F-def}, the condition $F(A^{-1},p)\leq0$ is equivalent to
\[
\sigma_{2}(A^{-1})-|p|^{2}\operatorname{Tr}(Q_{\alpha}A^{-1})-\lambda(K)\leq0.
\]
Since $A^{-1}>0$, $Q_{\alpha}\geq0$, and $Q_{\alpha}\neq0$,
\begin{equation}\label{eq:positive-denominator}
\operatorname{Tr}(Q_{\alpha}A^{-1})>0.
\end{equation}
Thus the previous inequality is equivalent to
\begin{equation}\label{eq:H-sublevel}
H_{\alpha,\lambda}(A)
:=
\frac{\sigma_{2}(A^{-1})-\lambda(K)}
{\operatorname{Tr}(Q_{\alpha}A^{-1})}
\leq |p|^{2}.
\end{equation}

using theorem\ref{thm1}, the function
\begin{equation}\label{eq:H-convex}
A\mapsto
\frac{\sigma_{2}(A^{-1})}{\operatorname{Tr}(Q_{\alpha}A^{-1})}
-\frac{\lambda}{\operatorname{Tr}(Q_{\alpha}A^{-1})}
=
\frac{\sigma_{2}(A^{-1})-\lambda}{\operatorname{Tr}(Q_{\alpha}A^{-1})}
\end{equation}
is convex on $S^{n}_{++}$. Therefore $\Gamma_{p}$ is a sublevel set of the convex function $H_{\alpha,\lambda}$, and is consequently convex, in particular locally convex.

\subsubsection{Convexity of the Sublevel Set: The Case $p=0$}

When $p=0$, the vector $\alpha=p/|p|$ is not defined. This is exactly the singular point in the division-based proof. We handle it by a limiting intersection argument.

Fix an arbitrary unit vector $\alpha\in\mathbb{R}^{n}$ and set
\[
p_{\varepsilon}=\varepsilon\alpha,\qquad \varepsilon>0.
\]
By the previous step, $\Gamma_{p_{\varepsilon}}$ is convex for every $\varepsilon>0$. We claim that
\begin{equation}\label{eq:intersection}
\Gamma_{0}=\bigcap_{\varepsilon>0}\Gamma_{p_{\varepsilon}}.
\end{equation}
Indeed, $A\in\Gamma_{p_{\varepsilon}}$ is equivalent to
\[
\sigma_{2}(A^{-1})-\lambda(K)
\leq
\varepsilon^{2}\operatorname{Tr}(Q_{\alpha}A^{-1}).
\]
If $A\in\bigcap_{\varepsilon>0}\Gamma_{p_{\varepsilon}}$, then this inequality holds for every $\varepsilon>0$. Letting $\varepsilon\to0$ yields
\[
\sigma_{2}(A^{-1})-\lambda(K)\leq0,
\]
which is exactly $A\in\Gamma_{0}$.

Conversely, if $A\in\Gamma_{0}$, then
\[
\sigma_{2}(A^{-1})-\lambda(K)\leq0.
\]
Since
\[
\varepsilon^{2}\operatorname{Tr}(Q_{\alpha}A^{-1})\geq0,
\]
we immediately get
\[
\sigma_{2}(A^{-1})-\lambda(K)
\leq
\varepsilon^{2}\operatorname{Tr}(Q_{\alpha}A^{-1}),
\]
so $A\in\Gamma_{p_{\varepsilon}}$ for every $\varepsilon>0$. This proves \eqref{eq:intersection}.

The intersection of any family of convex sets is convex. Hence $\Gamma_{0}$ is convex. Therefore, for every $p\in\mathbb{R}^{n}$, the set
\[
\Gamma_{p}
=
\{A\in S^{n}_{++}:F(A^{-1},p)\leq0\}
\]
is convex.

\subsubsection{Application of the Bian--Guan Level-Set Constant Rank Theorem}

Since $F$ is independent of $z$ and $x$, convexity in the $A$ variable implies the local convexity of
\[
\{(A,z,x)\in S^{n}_{++}\times\mathbb{R}\times K:
F(A^{-1},p,z,x)\leq0\}
\]
in the variables $(A,z,x)$. Combining this with the ellipticity verified in \eqref{eq:elliptic}, the nondegeneracy condition \eqref{eq:nondegenerate}, the convexity $D^{2}v\geq0$, and the regularity $v\in C^{4}(K)$, all assumptions of Theorem \ref{thm:bg-level} are satisfied. Hence
\[
\operatorname{rank}D^{2}v(x)
\]
is constant in the connected domain $K$. This proves Theorem \ref{thm:revised}.
\subsection{proof of theorem\ref{thm:mainssss}}
Through the same discussion as \cite{ma2008convexity}, If we can prove the following constant rank theorem, then Theorem\ref{thm:mainssss} is proven.
\begin{theorem}\label{lem:constant-rank}
Let $u\in C^{4}(\Omega)$ be an admissible solution of 
\begin{equation}\label{eq:main-s2}
\sigma_{2}(\lambda(D^{2}u))=1\quad\text{in }\Omega,
\qquad
u=0\quad\text{on }\partial\Omega.
\end{equation}
where $\Omega\subset\mathbb{R}^{n},n\geq4$ is any domain. If
\[
v:=-( -u)^{1/2}
\]
is a convex function, i.e. the Hessian matrix $W=\{v_{ij}\}$ is semipositive in $\Omega$, then $(v_{ij})$ has constant rank in $\Omega$.
\end{theorem}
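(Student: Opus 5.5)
The plan is to rewrite \eqref{eq:main-s2} as a fully nonlinear equation for $v=-(-u)^{1/2}$ and then check the three hypotheses of the Bian--Guan theorem (Theorem \ref{thm:bg-level}), as was done for Theorem \ref{thm:revised}. The new feature is that the equation for $v$ depends on $z=v$. So we need convexity of the sublevel set jointly in $(A,z)$, not only in $A$, and this is the step I expect to be the main obstacle.

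\emph{Step 1: the equation for $v$.} Write $w=-v=(-u)^{1/2}>0$, so that $u=-w^{2}$. A direct computation gives
\[
u_{ij}=2w\Bigl(v_{ij}-\frac{v_iv_j}{w}\Bigr),\qquad\text{i.e.}\qquad D^{2}u=2w\Bigl(D^{2}v-\frac{Dv\otimes Dv}{w}\Bigr).
\]
Using the homogeneity of $\sigma_2$ and the identity \eqref{eq:s2-identity} with $p$ replaced by $p/\sqrt{w}$, the equation becomes $F(D^2v,Dv,v)=0$, where
\[
F(r,p,z)=\sigma_{2}(r)-\frac{\operatorname{Tr}(P(p)r)}{-z}-\frac{1}{4z^{2}}
=\sigma_2\Bigl(r-\frac{p\otimes p}{-z}\Bigr)-\frac{1}{4z^2},\qquad z<0 .
\]

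\emph{Step 2: ellipticity and nondegeneracy.} Exactly as in \eqref{eq:Fij-solution}, along the solution we get $F^{ij}=T_1(W)_{ij}$ with $W=D^2v-Dv\otimes Dv/w=D^2u/(2w)$. Admissibility of $u$ means $D^2u\in\Gamma_2$, hence $W\in\Gamma_2$ and $T_1(W)>0$. Nondegeneracy is immediate, since $F(0,p,z)=-1/(4z^2)\neq0$.

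\emph{Step 3: convexity of the sublevel set in $(A,z)$.} Fix $p$ and put $w=-z$. Multiplying the inequality $F(A^{-1},p,z)\le0$ by $w^2>0$ and using $w^2\sigma_2(A^{-1})=\sigma_2((A/w)^{-1})$ and $w\operatorname{Tr}(P(p)A^{-1})=\operatorname{Tr}(P(p)(A/w)^{-1})$, the condition becomes
\[
\sigma_2\bigl((A/w)^{-1}\bigr)-\operatorname{Tr}\bigl(P(p)(A/w)^{-1}\bigr)-\tfrac14\le0 .
\]
In other words, $A/w\in\Gamma_p$, where $\Gamma_p$ is the set \eqref{eq:Gamma-p} with the constant $\lambda(K)$ replaced by $1/4$.

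\begin{itemize}
\item For $p\neq0$, the set $\Gamma_p$ is convex by Theorem \ref{thm1}, exactly as in \eqref{eq:H-sublevel}--\eqref{eq:H-convex}.
\item For $p=0$, $\Gamma_0$ is convex by the intersection argument \eqref{eq:intersection}.
\end{itemize}

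Hence the sublevel set equals $\{(A,w):\,w>0,\ A\in w\,\Gamma_p\}$. This is the conic hull of $\Gamma_p\times\{1\}$, and it is convex: if $A_i=w_iC_i$ with $C_i\in\Gamma_p$, then
\[
\textstyle\sum t_iA_i=\bigl(\sum t_iw_i\bigr)\,C,\qquad C=\sum\frac{t_iw_i}{\sum t_jw_j}C_i\in\Gamma_p .
\]
Since $z=-w$ is affine in $w$, and $F$ does not depend on $x$, the set $\{(A,z,x):F(A^{-1},p,z,x)\le0\}$ is convex. The point to check carefully here is that Theorem \ref{thm1} applies with $\lambda=1/4$ and that the scaling identities are exact.

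\emph{Step 4: conclusion.} We have $v\in C^4$ with $D^2v\ge0$ by assumption, and ellipticity, nondegeneracy and the convexity of the sublevel sets from Steps 2 and 3. Theorem \ref{thm:bg-level} therefore gives that $\operatorname{rank}(v_{ij})$ is constant in $\Omega$.
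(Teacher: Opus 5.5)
Your proposal is correct and follows the paper's proof essentially step by step. Your $F$ is the paper's $F(r,p,z)=z^{2}\sigma_{2}(r)+z\operatorname{Tr}(P(p)r)-\frac14$ divided by $z^{2}$, so the sublevel sets coincide, and the rescaling $A/w$ with Theorem \ref{thm1} at $\lambda=1/4$, the intersection argument at $p=0$, and the conic-hull argument for joint convexity in $(A,z)$ are exactly the paper's. Your normalization also makes ellipticity explicit, $F^{ij}=T_{1}\bigl(D^{2}u/(2w)\bigr)>0$, where the paper only says that ellipticity and nondegeneracy are proved as in Theorem \ref{thm:revised}.
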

\subsubsection{The Transformed Equation}

Since
\[
v=-( -u)^{1/2},
\]
we have
\[
u=-v^{2}.
\]
Write
\[
r=D^{2}v,\qquad p=Dv,\qquad z=v<0.
\]
Then
\[
u_{i}=-2vv_{i},
\]
and
\[
u_{ij}=-2(vv_{ij}+v_{i}v_{j}).
\]
In matrix form,
\[
D^{2}u=-2(zr+p\otimes p).
\]
Since $S_{2}$ is homogeneous of degree two, the equation $S_{2}(D^{2}u)=1$ becomes
\[
4\sigma_{2}(zr+p\otimes p)=1.
\]
For a matrix $r$ and a vector $p$,
\begin{equation}\label{eq:s2-identity}
\sigma_{2}(zr+p\otimes p)=z^{2}\sigma_{2}(r)+z\,\operatorname{Tr}(P(p)r),
\end{equation}
where
\[
P(p)=|p|^{2}I-p\otimes p.
\]
Indeed, using
\[
\sigma_{2}(M)=\frac12\left((\operatorname{Tr}M)^{2}-\operatorname{Tr}(M^{2})\right)
\]
and $\sigma_{2}(p\otimes p)=0$, one obtains
\[
\sigma_{2}(zr+p\otimes p)
=z^{2}\sigma_{2}(r)+z\left(|p|^{2}\operatorname{Tr}r-p^{T}rp\right),
\]
and
\[
|p|^{2}\operatorname{Tr}r-p^{T}rp
=\operatorname{Tr}((|p|^{2}I-p\otimes p)r)
=\operatorname{Tr}(P(p)r).
\]
Thus the equation can be written as
\begin{equation}\label{eq:F}
F(r,p,z)=0,
\end{equation}
where
\begin{equation}\label{eq:F-def}
F(r,p,z):=z^{2}\sigma_{2}(r)+z\,\operatorname{Tr}(P(p)r)-\frac14.
\end{equation}
The proof of ellipticity and nondegeneracy of $F$ is similar, we only prove sublevel-set convexity.
\subsubsection{Sublevel-Set Convexity}

Fix $p\in\mathbb{R}^{n}$. We prove that
\begin{equation}\label{eq:level-set}
\{(A,z)\in S^{n}_{++}\times(-\infty,0):F(A^{-1},p,z)\leq0\}
\end{equation}
is convex.

Set
\[
s=-z>0.
\]
Substituting $r=A^{-1}$ into \eqref{eq:F-def}, the inequality $F(A^{-1},p,z)\leq0$ becomes
\begin{equation}\label{eq:basic-level}
s^{2}\sigma_{2}(A^{-1})-s\,\operatorname{Tr}(P(p)A^{-1})\leq\frac14.
\end{equation}

\subsubsection{The case $p\neq0$}

Assume first that $p\neq0$. Let
\[
\alpha=\frac{p}{|p|},\qquad Q_{\alpha}=I-\alpha\otimes\alpha.
\]
Then
\[
P(p)=|p|^{2}Q_{\alpha}.
\]
Set
\[
B=\frac{A}{s},\qquad A=sB.
\]
Then
\[
A^{-1}=\frac1s B^{-1},
\]
and hence
\[
s^{2}\sigma_{2}(A^{-1})=\sigma_{2}(B^{-1}),
\qquad
s\,\operatorname{Tr}(Q_{\alpha}A^{-1})
=\operatorname{Tr}(Q_{\alpha}B^{-1}).
\]
Thus \eqref{eq:basic-level} is equivalent to
\begin{equation}\label{eq:B-level}
\sigma_{2}(B^{-1})-|p|^{2}\operatorname{Tr}(Q_{\alpha}B^{-1})\leq\frac14.
\end{equation}
Since $B^{-1}>0$, $Q_{\alpha}\geq0$, and $Q_{\alpha}\neq0$,
\[
\operatorname{Tr}(Q_{\alpha}B^{-1})>0.
\]
Therefore \eqref{eq:B-level} is equivalent to
\[
\frac{\sigma_{2}(B^{-1})-\frac14}{\operatorname{Tr}(Q_{\alpha}B^{-1})}
\leq |p|^{2}.
\]
We use the theorem\ref{thm1}, for every unit vector $\alpha$ and every $\lambda>0$, the function
\begin{equation}\label{eq:matrix-convexity}
B\mapsto
\frac{\sigma_{2}(B^{-1})-\lambda}{\operatorname{Tr}(Q_{\alpha}B^{-1})}
\end{equation}
is convex on $S^{n}_{++}$. Taking $\lambda=1/4$, we obtain that
\[
K_{p}:=
\left\{
B\in S^{n}_{++}:
\frac{\sigma_{2}(B^{-1})-\frac14}{\operatorname{Tr}(Q_{\alpha}B^{-1})}
\leq |p|^{2}
\right\}
\]
is convex.

The level set \eqref{eq:level-set} can now be written as
\[
C_{p}=\{(A,z):s=-z>0,\ A=sB,\ B\in K_{p}\}.
\]
If $(A_{i},z_{i})\in C_{p}$, write
\[
s_{i}=-z_{i}>0,\qquad B_{i}=A_{i}/s_{i}\in K_{p},
\qquad i=1,2.
\]
For $0\leq\theta\leq1$, set
\[
A_{\theta}=\theta A_{1}+(1-\theta)A_{2},\qquad
z_{\theta}=\theta z_{1}+(1-\theta)z_{2},
\]
and
\[
s_{\theta}=-z_{\theta}=\theta s_{1}+(1-\theta)s_{2}>0.
\]
Then
\[
\frac{A_{\theta}}{s_{\theta}}
=\frac{\theta s_{1}}{s_{\theta}}B_{1}
+\frac{(1-\theta)s_{2}}{s_{\theta}}B_{2}.
\]
The coefficients are nonnegative and sum to one. Since $K_{p}$ is convex,
\[
A_{\theta}/s_{\theta}\in K_{p}.
\]
Thus $(A_{\theta},z_{\theta})\in C_{p}$. Hence the level set is convex when $p\neq0$.

\subsubsection{The case $p=0$}

When $p=0$, \eqref{eq:basic-level} becomes
\[
s^{2}\sigma_{2}(A^{-1})\leq\frac14.
\]
With $B=A/s$, this is
\[
\sigma_{2}(B^{-1})\leq\frac14.
\]
Define
\[
K_{0}=\left\{B\in S^{n}_{++}:\sigma_{2}(B^{-1})\leq\frac14\right\}.
\]
Fix any unit vector $\alpha$ and set $p_{\varepsilon}=\varepsilon\alpha$. Then
\[
K_{p_{\varepsilon}}
=
\left\{
B:
\sigma_{2}(B^{-1})-\varepsilon^{2}\operatorname{Tr}(Q_{\alpha}B^{-1})
\leq\frac14
\right\}.
\]
We claim that
\[
K_{0}=\bigcap_{\varepsilon>0}K_{p_{\varepsilon}}.
\]
If $B\in K_{0}$, then
\[
\sigma_{2}(B^{-1})-\varepsilon^{2}\operatorname{Tr}(Q_{\alpha}B^{-1})
\leq \sigma_{2}(B^{-1})\leq\frac14,
\]
so $B\in K_{p_{\varepsilon}}$ for every $\varepsilon>0$. Conversely, if $B\in K_{p_{\varepsilon}}$ for every $\varepsilon>0$, then
\[
\sigma_{2}(B^{-1})\leq\frac14+\varepsilon^{2}\operatorname{Tr}(Q_{\alpha}B^{-1})
\]
for every $\varepsilon>0$. Letting $\varepsilon\downarrow0$, we obtain $B\in K_{0}$.

Since every $K_{p_{\varepsilon}}$ is convex and arbitrary intersections of convex sets are convex, $K_{0}$ is convex. The same scaling argument used above then proves the convexity of the level set \eqref{eq:level-set} when $p=0$.

\subsubsection{Conclusion}

We have verified the three hypotheses needed for the Bian--Guan level-set constant rank theorem \cite{bian2010structural}, which refines the original microscopic convexity principle in \cite{bian2009microscopic}:
\begin{enumerate}
    \item ellipticity, namely $(F^{ij})>0$;
    \item nondegeneracy, namely $F(0,p,z)=-1/4\neq0$;
    \item for each fixed $p$, convexity of the set
    \[
    \{(A,z):A\in S^{n}_{++},\ z<0,\ F(A^{-1},p,z)\leq0\}.
    \]
\end{enumerate}
Since $D^{2}v\geq0$ and $\Omega$ is connected, the Bian--Guan theorem applies and gives
\[
\operatorname{rank}D^{2}v(x)
\]
constant in $\Omega$.

\section{Proof of Theorem \ref{thm:strict-convex}}

In this section, we shall use the deformation process to prove Theorem \ref{thm:strict-convex}. The constant rank theorem is a very powerful tool to produce strict convex solution for nonlinear elliptic equation, see for example Caffarelli and Friedman \cite{caffarelli1985convexity}, Korevaar and Lewis \cite{korevaar1987convex} and Guan and Ma \cite{guan2003christoffel}. Here we follow the approach of Korevaar and Lewis \cite{korevaar1987convex} and Ma and Xu \cite{ma2008convexity} to get the result. First we need understand the radial solution of Eq. \eqref{eq:s2-eigen} defined on ball, along the idea of McCuan \cite{mccuan2002concavity}. Then we study the geometrical properties of the solution near the convex boundary.

\begin{lemma}\label{lem:ball}
Let $B_{R}(o)$ be unit ball in $\mathbb{R}^{n}$ with radius $R>0$. Let $u\in C^{\infty}(B_{R})\cap C^{1,1}(\overline{B}_{R})$ be the eigenfunction for Eq. \eqref{eq:s2-eigen} in $B_{R}(o)$. Then $v=-\log(-u)$ is a strict convex function in $B_{R}(o)$.
\end{lemma}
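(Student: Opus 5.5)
The plan is to use radial symmetry and reduce the claim to a statement about a first-order ODE for $w:=v'$. That statement can then be checked by a first-zero argument, in the spirit of McCuan \cite{mccuan2002concavity}.

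\emph{Setup.} By uniqueness (up to a positive factor) of the eigenfunction, $u(x)=u(r)$ with $r=|x|$, $u'(0)=0$ and $u<0$ on $[0,R)$. Since $u$ is $2$-convex it is subharmonic. Together with radial symmetry and $u(R)=0$, this should give $u'(r)>0$ for $0<r<R$; I would check this directly from the ODE below.

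At the origin the equation reads $\binom n2 u''(0)^2=\lambda_2 u(0)^2$. Admissibility forces $\Delta u>0$, so $u''(0)>0$.

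Set $v=-\log(-u)$ and $w=v'=-u'/u$. Then $w>0$ on $(0,R)$, $w(0)=0$ and $w'(0)=-u''(0)/u(0)>0$. The eigenvalues of $D^2v$ are $v''=w'$ in the radial direction and $w/r$ with multiplicity $n-1$; at $r=0$ both equal $w'(0)>0$. Hence $D^2v>0$ is equivalent to $w'>0$ on $[0,R)$.

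\emph{The ODE.} Plug into \eqref{eq:v-equation}. We have $Dv=w\,e_r$ and $P(Dv)=w^2(I-e_r\otimes e_r)$, so $\operatorname{Tr}(P D^2v)=(n-1)w^3/r$. Writing $q=w/r$ and $c=\binom{n-1}{2}$, the equation becomes
\[
(n-1)\,q\,w' + c\,q^2 - (n-1)\,r^2 q^3=\lambda_2 .
\]
Equivalently, $w'=h/((n-1)q)$ with
\[
h(r):=\lambda_2+(n-1)r^2q^3-c\,q^2 .
\]
Since $q>0$, the sign of $w'$ is the sign of $h$, and $h(0)>0$ because $w'(0)>0$.

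\emph{First-zero argument.} Suppose $h$ vanishes somewhere in $(0,R)$, and let $r_1$ be its first zero. Then $h>0$ on $[0,r_1)$, so $h'(r_1)\le0$. At $r_1$ we have $w'=0$, hence $q'=(w'-q)/r=-q/r$. Differentiating $h$ and using this gives
\[
r\,h'(r_1)=-(n-1)r^2q^3+2c\,q^2 .
\]
Now use $h(r_1)=0$, i.e. $(n-1)r^2q^3=cq^2-\lambda_2$. This yields
\[
r_1h'(r_1)=c\,q^2+\lambda_2>0,
\]
a contradiction. Hence $w'>0$ on $[0,R)$, i.e. $D^2v>0$ in $B_R$.

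The main point requiring care is the preliminary step: the smoothness of the radial reduction at $r=0$ and the positivity $u'>0$, $w>0$ on $(0,R)$. For these I would integrate $(r^{n-2}u'^2)'=\frac{2\lambda_2}{n-1}r^{n-1}u^2$, which follows from the radial form of \eqref{eq:s2-eigen} and shows that $r^{n-2}u'^2$ is increasing from $0$. Continuity of $u'$ and $u''(0)>0$ then fix the sign $u'>0$. The remaining computation is routine.
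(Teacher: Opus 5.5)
Your proof is correct and follows the paper's route. Both reduce to the radial ODE for $v'$ and then run a first-zero argument on $v''$: differentiating the equation at the first zero forces a positive third derivative. Your identity $r_1h'(r_1)=c\,q^2+\lambda_2>0$ is exactly the paper's $v'''(r_0)>0$, rewritten through $h=(n-1)q\,v''$. Your identity $(r^{n-2}u'^2)'=\frac{2\lambda_2}{n-1}r^{n-1}u^2$ also usefully justifies $u'>0$ on $(0,R)$, which the paper only asserts.
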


\begin{proof}
By the uniqueness of solution for Eq. \eqref{eq:s2-eigen} up to a constant factor, we know the solution $u$ is a radial function. We set
\[
u(x)=\varphi(|x|)=\varphi(r),\qquad r=|x|,
\]
where $r\in[0,R]$, $\varphi(r)<0$ for $r\in[0,R)$. Then $\varphi$ is an increasing function in $(0,R)$ and $\varphi'(0)=\varphi(R)=0$.

Since
\[
\frac{\partial r}{\partial x_{i}}=\frac{x_{i}}{r},\qquad
\frac{\partial^{2}r}{\partial x_{i}\partial x_{j}}=-r^{-3}x_{i}x_{j}+r^{-1}\delta_{ij},
\]
it follows that
\[
u_{ij}=(\varphi''r^{-2}-\varphi'r^{-3})x_{i}x_{j}+\varphi'r^{-1}\delta_{ij},
\]
and
\begin{equation}\label{eq:s2-radial}
\sigma_{2}(D^{2}u)=(n-1)\varphi'\varphi''r^{-1}+\frac{\left( n-1 \right) \left( n-2 \right)}{2}(\varphi')^{2}r^{-2}.
\end{equation}
For $v=-\log(-\varphi)$, we have
\[
\varphi'=e^{-v}v',\qquad \varphi''=e^{-v}\bigl(v''-(v')^{2}\bigr).
\]
So Eq. \eqref{eq:s2-eigen} on $u$ transforms to the following equation on $v$:
\begin{equation}\label{eq:radial-v}
(n-1)rv'v''-(n-1)r(v')^{3}+\frac{\left( n-1 \right) \left( n-2 \right)}{2}(v')^{2}=\lambda r^{2},\qquad 0<r<R.
\end{equation}
It follows that
\[
v'(0)=0,\qquad v'(r)>0\quad\text{for }0<r<R,
\]
and $v''(0)\geq0$. Let $L=\lim_{r\to0^{+}}v'(r)/r$, then by L'Hopital rule,
\[
L=\lim_{r\to0^{+}}v''(r)=v''(0).
\]
From \eqref{eq:radial-v}, it follows that
\begin{equation}\label{eq:vpp0}
v''(0)=\sqrt{\frac{2\lambda}{n(n-1)}}>0.
\end{equation}
Assume by contradiction the existence of a smallest positive $r_{0}$ for which $v''(r_{0})=0$. We know $v'(r)>0$ for $0<r\leq R$ and $v''(r)>0$ for $0<r<r_{0}$. Differentiating \eqref{eq:radial-v} and evaluating at $r=r_{0}$, we obtain
\[
v'''(r_{0})=\frac{2\lambda}{(n-1)v'(r_{0})}+\frac{(v'(r_{0}))^{2}}{r_{0}}>0,
\]
which contradicts the sign of $v''$. Hence $v$ is strictly convex in $[0,R)$ and the lemma is proven.
\end{proof}

Now we state the following well-known boundary convexity lemma, see for example Caffarelli and Friedman \cite{caffarelli1985convexity} or Korevaar \cite{korevaar1983convex}.

\begin{lemma}\label{lem:boundary}
Let $\Omega\subset\mathbb{R}^{n}$ be smooth, bounded and strictly convex. Let $u\in C^{\infty}(\Omega)\cap C^{1,1}(\overline{\Omega})$ satisfies
\begin{equation}\label{eq:boundary-data}
u<0\ \text{in }\Omega,\qquad u=0\ \text{and}\ Du\cdot\nu>0\ \text{on }\partial\Omega,
\end{equation}
where $\nu$ is the exterior normal to $\partial\Omega$. Let
\[
\Omega_{\varepsilon}=\{x\in\Omega:d(x,\partial\Omega)>\varepsilon\}
\]
and let $v=f(u)$. Then for small enough $\varepsilon>0$ the function $v$ is strictly convex in a boundary strip $\Omega\setminus\Omega_{\varepsilon}$ if $f$ satisfies
\[
f'>0,\qquad f''>0,\qquad \lim_{u\to0^{-}}\frac{f'}{f''}=0.
\]
\end{lemma}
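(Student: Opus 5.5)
We prove Lemma \ref{lem:boundary} by computing $D^{2}v$ directly in coordinates adapted to the boundary. Near $\partial\Omega$ the gradient of $u$ does not vanish, so the Hessian splits into a tangential block and a normal part, and the hypotheses on $f$ make the normal part dominate. Since $v=f(u)$,
\[
v_{ij}=f'(u)\,u_{ij}+f''(u)\,u_iu_j .
\]
Because $f'>0$, convexity of $v$ is equivalent to positivity of
\[
u_{ij}+\frac{f''}{f'}(u)\,u_iu_j=:u_{ij}+\mu(u)\,u_iu_j,
\]
where $\mu=f''/f'\to+\infty$ as $u\to0^{-}$. Two facts drive the argument. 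First, $Du\cdot\nu>0$ on $\partial\Omega$ and $u\in C^{1,1}(\overline\Omega)$, so $|Du|\geq c_0>0$ in a strip $\Omega\setminus\Omega_{\varepsilon_0}$. Second, in that strip $|u|\leq C\varepsilon$, hence $\mu(u)\to\infty$ uniformly as $\varepsilon\to0$.

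Fix $x$ in the strip. Rotate coordinates so that $Du(x)=|Du|e_n$, and write $D^{2}u=\begin{pmatrix}U' & w\\ w^{T}& u_{nn}\end{pmatrix}$, where $U'$ is the tangential $(n-1)\times(n-1)$ block. The level set $\{u=u(x)\}$ through $x$ has second fundamental form $U'/|Du|$, up to sign conventions. Strict convexity of $\partial\Omega$, together with continuity of $D^{2}u$ up to the boundary, should give $U'\geq c_1 I$ for $\varepsilon$ small. The quadratic form
\[
\xi^{T}\bigl(D^{2}u+\mu\,Du\otimes Du\bigr)\xi
=\xi'^{T}U'\xi'+2\xi_n\,w\cdot\xi'+(u_{nn}+\mu|Du|^{2})\xi_n^{2}
\]
is then positive definite as soon as $c_1\bigl(u_{nn}+\mu|Du|^{2}\bigr)>|w|^{2}$. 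Since $|w|$ and $|u_{nn}|$ are bounded by the $C^{1,1}$ norm, and $\mu|Du|^{2}\geq\mu c_0^{2}\to\infty$, this holds once $\varepsilon$ is small. The Schur complement argument then gives $D^{2}v>0$ in $\Omega\setminus\Omega_\varepsilon$.

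\textbf{Main obstacle.} The delicate step is the lower bound $U'\geq c_1I$ uniformly near the boundary. With only $u\in C^{1,1}(\overline\Omega)$, the tangential second derivatives are bounded but need not be continuous. On $\partial\Omega$ itself we have $U'=|Du|\,\mathrm{II}_{\partial\Omega}\geq c_0\kappa_{\min}I$. To propagate this into the interior I would first try to use the smoothness of $u$ up to the boundary. In the application, $\partial\Omega$ is smooth and $u$ solves a nondegenerate elliptic equation there, so in fact $u\in C^{2}(\overline\Omega)$ near $\partial\Omega$ by boundary regularity. Continuity then yields $U'\geq\frac12 c_0\kappa_{\min}I$ in a thin strip.

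If only $C^{1,1}$ is available, I would fall back on an alternative route. The idea is to compare the level sets of $u$ with $\partial\Omega$ via the implicit function theorem in the normal direction, or to weaken the claim to strict convexity in the viscosity sense. I would flag this as the point requiring the extra regularity.
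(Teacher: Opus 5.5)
\textbf{The reduction.} Your reduction is correct and is the standard (Korevaar) computation. Since $f'>0$, strict convexity of $v$ is equivalent to $D^2u+\mu(u)\,Du\otimes Du>0$ with $\mu=f''/f'\to+\infty$. In the strip $\mu|Du|^2\to\infty$ uniformly while $D^2u$ stays bounded. So everything reduces to a uniform bound $U'\geq c_1 I$ on the tangential block, i.e.\ to uniform convexity of the level sets $\{u=-s\}$ for small $s>0$.

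Your argument is complete if two things hold: $D^2u$ is continuous up to $\partial\Omega$, and the principal curvatures of $\partial\Omega$ are bounded below by some $\kappa_{\min}>0$. The second is uniform convexity, not mere strict convexity, and you use it implicitly. This is exactly the setting of Korevaar's lemma. The paper does not prove the statement itself. It cites Korevaar and only remarks, without details, that Caffarelli--Friedman's computation extends the result to $u\in C^{\infty}(\Omega)\cap C^{1,1}(\overline{\Omega})$.

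\textbf{The gap: the $C^{1,1}$ case.} This is the case actually stated and the one needed for Theorem \ref{thm:strict-convex}, and your proposed repair fails there. The eigenfunction equation $\sigma_2(D^2u)=\lambda_2(K)(-u)^2$ is \emph{degenerate} at $\partial K$, because its right-hand side vanishes on the boundary. That is why only $u\in C^{1,1}(\overline K)$ is available \cite{wang1994class}, and the nondegenerate boundary regularity you invoke does not apply. It does apply to $\sigma_2(D^2u)=1$ in Theorem \ref{thm:mainssss}, where your proof is fine.

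Moreover, no argument using only the lemma's hypotheses can work, because at $C^{1,1}$ regularity the statement is false. On $\Omega=B_1$ let $u=|x|^2-1-3\eta$, where $\eta=\sum_{k\geq1}r_k^2\,\beta\bigl((x-p_k)/r_k\bigr)$ with $r_k=4^{-k}$ and $p_k=(1-2r_k)e_n$. Here $\beta\geq0$ is smooth, even, supported in the unit ball, with $\partial_{11}\beta(0)=1$.
\begin{itemize}
\item The supports are disjoint and accumulate only at $e_n$. Hence $u\in C^{\infty}(B_1)\cap C^{1,1}(\overline{B_1})$, $u<0$ in $B_1$, and $u=0$, $Du\cdot\nu=2$ on $\partial B_1$.
\item However, $Du(p_k)=2p_k\perp e_1$ and $u_{11}(p_k)=-1$. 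Therefore $e_1^{T}D^2v(p_k)\,e_1=-f'(u(p_k))<0$ at points $p_k\to\partial B_1$.
\end{itemize}

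\textbf{What is missing.} You need an argument that the level sets of $u$ near $\partial\Omega$ are uniformly convex. It must use the PDE, or an independent proof that the eigenfunction is $C^2$ up to the boundary; neither your proposal nor the paper's remark supplies it. Your viscosity-sense fallback does not help either, since the example shows a genuine loss of convexity.
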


\begin{remark}
In Korevaar \cite{korevaar1983convex}, the function $u\in C^{2}(\overline{\Omega})$. But we can follow the calculation in Caffarelli and Friedman \cite{caffarelli1985convexity} to know the similar result is true in our case $u\in C^{\infty}(\Omega)\cap C^{1,1}(\overline{\Omega})$.
\end{remark}

\begin{proof}[Proof of Theorem \ref{thm:strict-convex}]
If $K$ is the ball $B_{R}(o)$, by Lemma \ref{lem:ball}, for the solution $u$ of \eqref{eq:s2-eigen}, we have $v=-\log(-u)$ is a strict convex function in $B_{R}(o)$. For an arbitrary bounded strict convex domain $K$, set
\[
K_{t}=(1-t)B_{R}(o)+tK,\qquad 0\leq t\leq1.
\]
Then from the theory of convex bodies (see for example \cite{schneider1993convex} and \cite{zhu2002mean}), we can deform $B_{R}(o)$ continuously into $K$ by the family $(K_{t})$, $0\leq t<1$, of strictly convex domain in such a way that $\partial K_{t}\to\partial K_{s}$ as $t\to s$ in the sense of Hausdorff distance, whenever $0\leq s\leq1$. And the deformation also is chosen so that $\partial K_{t}$, $0\leq t<1$, can be locally represented for some $\alpha$, $0<\alpha<1$, by a function whose norm in the space $C^{2,\alpha}$ of functions with Holder continuous second derivatives depends only on $\delta$, whenever $0<t\leq\delta<1$.

Suppose $u_{t}\in C^{\infty}(K_{t})\cap C^{1,1}(\overline{K}_{t})$ is the admissible solution of \eqref{eq:s2-eigen}, $v_{t}:=-\log(-u_{t})$ and $H_{t}$ is the corresponding Hessian matrix of $v_{t}$. First $H_{0}$ is positive definite, and from the boundary estimates (Lemma \ref{lem:boundary}) we have $H_{\delta}$ is positive definite in an $\varepsilon$ neighborhood of $\partial K_{\delta}$. From the a priori estimates of the solution $u$ on the Hessian equation \cite{wang94}, we know this bounded depends only on the uniformly bounded geometry of $K_{t}$ which depends on the geometry $K$ and $t$. We conclude that if $v(\cdot,s)$ is strictly convex for all $0\leq s<t$, then $v(\cdot,t)$ is convex.

So if for some $\delta$, $0<\delta<1$, $H_{\delta}$ is positive semi-definite but not positive definite in $K_{\delta}$, we say it is impossible by constant rank theorem (Theorem \ref{thm:revised}) and boundary estimates (Lemma \ref{lem:boundary}). We conclude $H_{\delta}$ is positive definite. Then $v=-\log(-u)$ is strictly convex in $K$.
\end{proof}

\section{Proof of Theorem \ref{thm:bm}}

The aim of this section is to prove Theorem \ref{thm:bm}. Now we state some elementary propositions on the convexity of the matrix functions.

First we recall Jensen's inequality for means (see \cite{brascamp1976extensions}). If $a,b$ are real positive numbers, $\alpha\in[-\infty,+\infty]$ and $\lambda\in(0,1)$, we define
\[
m_{\alpha}(a,b,\lambda)=
\begin{cases}
[(1-\lambda)a^{\alpha}+\lambda b^{\alpha}]^{1/\alpha}, & \alpha\in(-\infty,0)\cup(0,+\infty),\\
\min(a,b), & \alpha=-\infty,\\
a^{1-\lambda}b^{\lambda}, & \alpha=0,\\
\max(a,b), & \alpha=+\infty.
\end{cases}
\]
Jensen's inequality for means implies that
\[
m_{\alpha}(a,b,\lambda)\leq m_{\beta}(a,b,\lambda)\quad\text{if }\alpha\leq\beta.
\]
In particular, the arithmetic--geometric mean inequality holds
\[
a^{1-\lambda}b^{\lambda}\leq (1-\lambda)a+\lambda b,\qquad a,b\geq0,\ \lambda\in[0,1].
\]

\begin{proposition}\label{prop:s2-convex}
$\sigma_{2}(A^{-1})^{1/2}$ is convex in $A\in S^{n}_{++}$.
\end{proposition}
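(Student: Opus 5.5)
Our plan is to use the identity of Lemma \ref{lem:inverse-remove},
\[
\sigma_{2}(A^{-1})=\frac{\sigma_{n-2}(A)}{\sigma_{n}(A)},
\]
so that
\[
\sigma_{2}(A^{-1})^{1/2}=\left(\frac{\sigma_{n}(A)}{\sigma_{n-2}(A)}\right)^{-1/2}.
\]
The ratio $q(A)=\sigma_{n}(A)/\sigma_{n-2}(A)$ is homogeneous of degree $2$ and positive on $S^{n}_{++}$. We reduce to showing that $q^{1/2}$ is concave. Once this is done, Lemma \ref{lem:inverse-convex} applied to the positive concave function $f=q^{1/2}$ shows that $f^{-1}=\sigma_{2}(A^{-1})^{1/2}$ is convex, which is the claim.

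For the concavity of $q^{1/2}$ we write
\[
q=\frac{\sigma_n}{\sigma_{n-1}}\cdot\frac{\sigma_{n-1}}{\sigma_{n-2}}.
\]
Each factor is a G\aa rding quotient, so Theorem \ref{thm:garding} applies:
\begin{itemize}
\item[(i)] $\sigma_n/\sigma_{n-1}=\det/D_I\det$ is concave on $S^n_{++}$, since $p=\det$ is hyperbolic with respect to $I$.
\item[(ii)] $\sigma_{n-1}/\sigma_{n-2}$ is concave. To see this, take $p=\sigma_{n-1}=D_I\det$, which is hyperbolic by Theorem \ref{thm:directional-hyperbolic} with cone containing $S^n_{++}$. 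Its derivative in direction $I$ is $D_I\sigma_{n-1}=2\sigma_{n-2}$, so the quotient is concave up to the harmless factor $2$.
\end{itemize}
Both factors are positive, concave and homogeneous of degree one. The geometric mean of two positive concave functions is concave, because $(s,t)\mapsto\sqrt{st}$ is concave and nondecreasing in each variable on the positive quadrant. Hence $q^{1/2}=\sqrt{(\sigma_n/\sigma_{n-1})(\sigma_{n-1}/\sigma_{n-2})}$ is concave.

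An alternative route to the same conclusion is to note directly that $\sigma_n/\sigma_{n-2}$ raised to the power $1/2$ is concave. This is the classical $(\sigma_n/\sigma_{n-2})^{1/2}$ concavity on the positive cone and also follows from the above. Alternatively one could apply Theorem \ref{thm:garding} twice in the same spirit as the proof of Proposition \ref{thm:main}.

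The only point needing care is step (ii): we must identify $D_I\sigma_{n-1}$ with $2\sigma_{n-2}$, which follows from $\frac{d}{dt}\sigma_k(\lambda+t\mathbf 1)|_{t=0}=(n-k+1)\sigma_{k-1}(\lambda)$, and confirm that $S^n_{++}$ lies in the hyperbolicity cone of $\sigma_{n-1}$. Both are standard: by Theorem \ref{thm:directional-hyperbolic}, the cone of $D_I\det$ contains $S^n_{++}$. Everything else is the elementary composition argument together with Lemma \ref{lem:inverse-convex}.
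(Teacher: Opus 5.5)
Your proof is correct, and it differs from the paper's, which contains no argument at all: the paper declares the statement a special case of Theorem 15.16 in \cite{lieberman1996second} and leaves the specialization to the reader. You instead build the proof from tools already set up in Section 2. Lemma \ref{lem:inverse-remove} turns the claim into concavity of $(\sigma_n/\sigma_{n-2})^{1/2}$. You factor $\sigma_n/\sigma_{n-2}$ into the two G{\aa}rding quotients $\det/D_I\det=\sigma_n/\sigma_{n-1}$ and $\sigma_{n-1}/D_I\sigma_{n-1}=\sigma_{n-1}/(2\sigma_{n-2})$, each concave on $S^n_{++}$ by Theorems \ref{thm:garding} and \ref{thm:directional-hyperbolic}. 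The concave, coordinatewise nondecreasing geometric mean recombines them, and Lemma \ref{lem:inverse-convex} passes to the reciprocal. The identities $D_I\det=\sigma_{n-1}$ and $D_I\sigma_{n-1}=2\sigma_{n-2}$ are right, and the composition step is sound. The citation is shorter, but your version is self-contained and has a concrete extra payoff. Track equality through your chain, using strict convexity of $s\mapsto 1/s$, strict monotonicity of $\sqrt{st}$ in each variable, and strict convexity of $\sigma_{n-1}/\sigma_n=\operatorname{Tr}(A^{-1})$ combined with the normalization argument of Theorem \ref{thm:equality}. This shows that equality in the convexity inequality at some $t\in(0,1)$ forces $A=B$. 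That is exactly the ``equality case in Proposition \ref{prop:s2-convex}'' which the paper invokes in the proof of Theorem \ref{thm:bm} but never establishes.
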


\begin{proof}
This is a special case of Theorem 15.16 in \cite{lieberman1996second}.
\end{proof}

\begin{proof}[Proof of Theorem \ref{thm:bm}]
For $i=0,1$, let $K_{i}$ be a convex domain in $\mathbb{R}^{n}$, and let $u_{i}$ be the solution of
\[
\begin{cases}
\sigma_{2}(D^{2}u_{i})=\lambda(K_{i})(-u_{i})^{2},\quad u_{i}<0 & \text{in }\operatorname{int}(K_{i}),\\
u_{i}=0 & \text{on }\partial K_{i}.
\end{cases}
\]
Then the function $v_{i}(x)=-\log(-u_{i}(x))$ solves
\begin{equation}\label{eq:vi-equation}
\begin{cases}
\sigma_{2}(D^{2}v_{i})-\operatorname{Tr}(P(\nabla v_{i})D^{2}v_{i})=\lambda(K_{i}) & \text{in }\operatorname{int}(K_{i}),\\
v_{i}(x)\to+\infty & x\to\partial K_{i},
\end{cases}
\end{equation}
where $P=(P_{ij})$ with $P_{ij}=|\nabla v|^{2}\delta_{ij}-v_{i}v_{j}$. In Section 4 we have proved $v_{i}$ is strictly convex in $K_{i}$, so that
\[
\det(D^{2}v_{i}(x))>0,\qquad x\in\operatorname{int}(K_{i}).
\]
By the boundary condition verified by $v_{i}$, we have
\[
\nabla v_{i}(\operatorname{int}(K_{i}))=\mathbb{R}^{n}.
\]
Let us now consider the conjugate function $v^{*}_{i}$ of $v_{i}$:
\[
v^{*}_{i}(\rho)=\sup_{x\in K_{i}}\{(x,\rho)-v_{i}(x)\},\qquad \rho\in\mathbb{R}^{n}.
\]
For the basic properties of this function we refer to \cite{rockafellar1970convex}. As $v_{i}$ is strictly convex, $v^{*}_{i}$ is defined on the whole $\mathbb{R}^{n}$, and $\nabla v^{*}_{i}$ is the inverse map of $\nabla v_{i}$:
\[
x=\nabla v^{*}_{i}(\nabla v_{i}(x)),\qquad x\in K_{i}.
\]
In particular,
\begin{equation}\label{eq:legendre-hessian}
D^{2}v_{i}(x)=\bigl[D^{2}v^{*}_{i}(\nabla v_{i}(x))\bigr]^{-1}.
\end{equation}

Let $t\in[0,1]$ and define $K_{t}=(1-t)K_{0}+tK_{1}$. Now introduce a new function $w$ in $K_{t}$ by
\begin{equation}\label{eq:w-def}
w(z)=\min\{(1-t)v_{0}(x)+tv_{1}(y):x\in K_{0},\,y\in K_{1},\,(1-t)x+ty=z\}.
\end{equation}
The function $w$ is called the infimal convolution of $v_{0}$ and $v_{1}$ in $K_{t}$. It is a strictly convex function, and from the boundary conditions in problem \eqref{eq:vi-equation} it can be deduced that
\[
\lim_{z\to\partial K_{t}}w(z)=+\infty.
\]
Moreover
\begin{equation}\label{eq:w-star}
w^{*}=(1-t)v^{*}_{0}+tv^{*}_{1}\quad\text{in }\mathbb{R}^{n}.
\end{equation}

Let $z\in K_{t}$. By the definition of $w$ and the boundary conditions in \eqref{eq:vi-equation}, there exist unique $x\in\operatorname{int}(K_{0})$ and $y\in\operatorname{int}(K_{1})$ such that $z=(1-t)x+ty$ and
\[
w(z)=(1-t)v_{0}(x)+tv_{1}(y).
\]
By the Lagrange multipliers theorem,
\[
\nabla v_{0}(x)=\nabla v_{1}(y)=:\rho.
\]
On the other hand,
\[
\nabla w^{*}(\rho)=(1-t)\nabla v^{*}_{0}(\rho)+t\nabla v^{*}_{1}(\rho)=(1-t)x+ty=z=\nabla w^{*}(\nabla w(z)).
\]
Hence by the injectivity of $\nabla w$ we have $\nabla w(z)=\rho$. Therefore,
\begin{equation}\label{eq:w-hessian}
D^{2}w(z)=\left[(1-t)(D^{2}v_{0}(x))^{-1}+t(D^{2}v_{1}(y))^{-1}\right]^{-1}.
\end{equation}
Also
\[
P(\nabla w)=P(\nabla v_{0})=P(\nabla v_{1})\equiv P(\rho)=:P.
\]

We claim that
\begin{equation}\label{eq:claim-a}
\sigma_{2}(D^{2}w(z))-\operatorname{Tr}(PD^{2}w(z))\leq \max_{i\in\{0,1\}}\lambda(K_{i}),\qquad z\in K_{t}.
\end{equation}
Assume the claim. Define $\overline{u}(z):=-e^{-w(z)}$ in $K_{t}$. Then
\[
\begin{cases}
\sigma_{2}(D^{2}\overline{u})\leq \max_{i\in\{0,1\}}\lambda(K_{i})(-\overline{u})^{2},\quad \overline{u}<0 & \text{in }\operatorname{int}(K_{t}),\\
\overline{u}=0 & \text{on }\partial K_{t}.
\end{cases}
\]
Multiplying both sides of the inequality by $-\overline{u}$ and integrating over $K_{t}$ gives
\[
\max_{i\in\{0,1\}}\lambda(K_{i})
\geq
\frac{-\int_{K_{t}}\overline{u}S_{2}(D^{2}\overline{u})\,dx}
{\int_{K_{t}}|\overline{u}|^{3}\,dx}
\geq \lambda(K_{t}).
\]
Hence
\begin{equation}\label{eq:max-lambda}
\max\{\lambda(K_{0}),\lambda(K_{1})\}\geq \lambda((1-t)K_{0}+tK_{1}),\qquad t\in[0,1].
\end{equation}
In order to get the Brunn--Minkowski inequality, replace $K_{0}$ with $K'_{0}$, $K_{1}$ with $K'_{1}$ and $t$ with $t'$ where
\[
K'_{0}=[\lambda(K_{0})]^{1/4}K_{0},\qquad
K'_{1}=[\lambda(K_{1})]^{1/4}K_{1},
\]
and
\[
t'=\frac{t[\lambda(K_{1})]^{-1/4}}
{(1-t)[\lambda(K_{0})]^{-1/4}+t[\lambda(K_{1})]^{-1/4}}.
\]
This yields \eqref{eq:bm-s2}.

It remains to prove \eqref{eq:claim-a}. Since $w$ is strictly convex in $K_{t}$, there exists a unique point $z_{0}\in K_{t}$ such that $Dw(z_{0})=0$.

At the unique point $z_{0}\in K_{t}$ with $Dw(z_{0})=0$, there are unique $x_{0}\in K_{0}$ and $y_{0}\in K_{1}$ such that $z_{0}=(1-t)x_{0}+ty_{0}$ and $Dv_{0}(x_{0})=Dv_{1}(y_{0})=0$. Moreover,
\[
\lambda(K_{0})=\sigma_{2}(D^{2}v_{0})(x_{0}),\qquad
\lambda(K_{1})=\sigma_{2}(D^{2}v_{1})(y_{0}).
\]
Using \eqref{eq:w-hessian} and Proposition \ref{prop:s2-convex}, we have
\[
\sigma_{2}(D^{2}w(z_{0}))^{1/2}
\leq (1-t)\sigma_{2}(D^{2}v_{0}(x_{0}))^{1/2}
+t\sigma_{2}(D^{2}v_{1}(y_{0}))^{1/2},
\]
which implies \eqref{eq:claim-a} at $z_{0}$.

At a point $z\in K_{t}$ with $Dw(z)\neq0$, there are unique $x\in K_{0}$ and $y\in K_{1}$ such that $z=(1-t)x+ty$ and $Dv_{0}(x)=Dv_{1}(y)\neq0$. Using Lemma\ref{thm:main} and Lemma\ref{lem:inverse-convex}, we have
\begin{equation}\label{eq:prop-use}
\frac{\sigma_{2}(D^{2}w(z))}{\operatorname{Tr}(PD^{2}w(z))}
\leq
(1-t)\frac{\sigma_{2}(D^{2}v_{0}(x))}{\operatorname{Tr}(PD^{2}v_{0}(x))}
+t\frac{\sigma_{2}(D^{2}v_{1}(y))}{\operatorname{Tr}(PD^{2}v_{1}(y))}.
\end{equation}
Consequently it follows from \eqref{eq:vi-equation} that
\begin{align*}
\frac{\sigma_{2}(D^{2}w(z))}{\operatorname{Tr}(PD^{2}w(z))}
&\leq
(1-t)\frac{\lambda(K_{0})}{\operatorname{Tr}(PD^{2}v_{0}(x))}
+t\frac{\lambda(K_{1})}{\operatorname{Tr}(PD^{2}v_{1}(y))}+1\\
&\leq
\max_{i\in\{0,1\}}\lambda(K_{i})
\left[
\frac{1-t}{\operatorname{Tr}(PD^{2}v_{0}(x))}
+\frac{t}{\operatorname{Tr}(PD^{2}v_{1}(y))}
\right]+1\\
&\leq
\max_{i\in\{0,1\}}\lambda(K_{i})
\frac{1}{\operatorname{Tr}(PD^{2}w(z))}+1,
\end{align*}
where the last inequality still comes from Proposition \ref{prop:matrix-convex}. Thus \eqref{eq:claim-a} follows.

Up to now, we complete the proof of the Brunn--Minkowski inequality.

We now deal with the equality case. If $K_{0}$ is homothetic to $K_{1}$, then the equality holds in \eqref{eq:bm-s2} by the homogeneity of $\lambda$ and by the invariance with respect to translation.

Conversely, if equality holds in \eqref{eq:bm-s2}, then the arguments above show that equality must hold in \eqref{eq:max-lambda}, up to a normalization of the involved sets. Namely, let $K'_{0}$, $K'_{1}$ and $t'$ be as above and let $K'_{t}=(1-t')K'_{0}+t'K'_{1}$. Thanks to the homogeneity of $\lambda$ we may assume
\[
\lambda(K'_{t})=\lambda(K'_{0})=\lambda(K'_{1})=1.
\]
Hence reduce the equality to the case in which the bodies $K_{0},K_{1}$ and $K_{t}$ have the same eigenvalue 1.

We shall prove that for $x\in K_{0}$ and $y\in K_{1}$ such that $z=(1-t)x+ty$ and $Dv_{0}(x)=Dv_{1}(y)=Dw(z)$, we have
\begin{equation}\label{eq:claim-b}
D^{2}v_{0}(x)=D^{2}v_{1}(y).
\end{equation}
Assuming \eqref{eq:claim-b}, as in Colesanti \cite{colesanti2005brunn}, we conclude that
\[
D^{2}v^{*}_{0}(\rho)=D^{2}v^{*}_{1}(\rho)\quad \forall \rho\in\mathbb{R}^{n},
\]
and hence
\[
\nabla v^{*}_{0}(\rho)=\nabla v^{*}_{1}(\rho)+\overline{\rho}\quad \forall \rho\in\mathbb{R}^{n}
\]
for some fixed $\overline{\rho}\in\mathbb{R}^{n}$. Finally
\[
K_{0}=\nabla v^{*}_{0}(\mathbb{R}^{n})=\nabla v^{*}_{1}(\mathbb{R}^{n})+\overline{\rho}=K_{1}+\overline{\rho}.
\]

It remains to prove \eqref{eq:claim-b}. At the unique point $z_{0}\in K_{t}$ with $Dw(z_{0})=0$, there are unique $x_{0}\in K_{0}$ and $y_{0}\in K_{1}$ such that $z_{0}=(1-t)x_{0}+ty_{0}$ and $Dv_{0}(x_{0})=Dv_{1}(y_{0})=0$. From \eqref{eq:w-hessian} and the equality case in Proposition \ref{prop:s2-convex}, we have
\[
\sigma_{2}(D^{2}w(z_{0}))^{1/2}
=(1-t)\sigma_{2}(D^{2}v_{0}(x_{0}))^{1/2}
+t\sigma_{2}(D^{2}v_{1}(y_{0}))^{1/2}
\]
if and only if $D^{2}v_{0}(x_{0})=D^{2}v_{1}(y_{0})$.

At a point $z\in K_{t}$ with $Dw(z)\neq0$, there are $x\in K_{0}$ and $y\in K_{1}$ such that $z=(1-t)x+ty$ and $Dv_{0}(x)=Dv_{1}(y)\neq0$. Then all the inequalities in \eqref{eq:prop-use} and the following estimates become equalities. Let
\[
A=(D^{2}v_{0}(x))^{-1},\qquad B=(D^{2}v_{1}(y))^{-1}.
\]
The equality case in Section\ref{sec equal} and Remark \ref{rem:inverse-equality} gives
\[
A=cB
\]
Now the equality in \eqref{eq:prop-use} immediately implies $A=B$, i.e.
\[
D^{2}v_{0}(x)=D^{2}v_{1}(y),
\]
thanks to the homogeneity of degree $-1$ in $\sigma_{2}(A^{-1})/\operatorname{Tr}(PA^{-1})$.

This proves \eqref{eq:claim-b} and finishes the proof of Theorem \ref{thm:bm}.
\end{proof}
\small
\bibliographystyle{alpha}
\bibliography{reference}

{\small
\indent 
(Xi-Nan Ma) Department of Mathematics, University of Science and Technology of China, Hefei, 230026, Anhui Province, China.\;
Email address: xinan@ustc.edu.cn\\ \\
(Jiahuan Li) Department of Mathematics, University of Science and Technology of China, Hefei, 230026, Anhui Province, China.\;
Email address: jiahuan@mail.ustc.edu.cn\\ \\
(Paolo Salani) Dip.to di Matematica e Informatica "U. Dini", Universit\`a degli Studi di Firenze, 50134 Firenze, Italy.
Email address: paolo.salani@unifi.it
}

\end{document}